\def\phi{{\varphi}}
\newcommand{\CO}[2]{ \left\langle #1 , #2 \right\rangle}
\DeclareSymbolFont{AMSb}{U}{msb}{m}{n}
\DeclareMathSymbol{\N}{\mathbin}{AMSb}{"4E}
\DeclareMathSymbol{\Z}{\mathbin}{AMSb}{"5A}
\DeclareMathSymbol{\R}{\mathbin}{AMSb}{"52}
\DeclareMathSymbol{\Q}{\mathbin}{AMSb}{"51}
\DeclareMathSymbol{\I}{\mathbin}{AMSb}{"49}
\DeclareMathSymbol{\C}{\mathbin}{AMSb}{"43}
\def\be{\begin{equation}}
\def\ee{\end{equation}}
\def\ber{\begin{eqnarray}}
\def\eer{\end{eqnarray}}
\def\beq{\begin{equation}}
\def\eeq{\end{equation}}
\def\Z{{\mathbb{Z}}}
\newcommand{ \pOm}{\partial \Omega}
\begin{document}

\addtolength{\textheight}{0 cm} \addtolength{\hoffset}{0 cm}
\addtolength{\textwidth}{0 cm} \addtolength{\voffset}{0 cm}

\newenvironment{acknowledgement}{\noindent\textbf{Acknowledgement.}\em}{}

\setcounter{secnumdepth}{5}

 \newtheorem{proposition}{Proposition}[section]
\newtheorem{theorem}{Theorem}[section]
\newtheorem{lemma}[theorem]{Lemma}
\newtheorem{coro}[theorem]{Corollary}
\newtheorem{remark}[theorem]{Remark}
\newtheorem{extt}[theorem]{Example}
\newtheorem{claim}[theorem]{Claim}
\newtheorem{conj}[theorem]{Conjecture}
\newtheorem{definition}[theorem]{Definition}
\newtheorem{application}{Application}
\newtheorem*{thm*}{Theorem A}
\newtheorem{corollary}[theorem]{Corollary}
\title{Multiplicity results for elliptic problems with super-critical concave and convex nonlinearties 
% on nonradial symmetric domains
\footnote{Abbas Moameni is  pleased to acknowledge the support of the  National Sciences and Engineering Research Council of Canada.
}}
\author{Najmeh Kuhestani\footnote{Department of Mathematics, Kharazmi University.
%std\_kuhestani@khu.ac.ir. 
 School of Mathematics and Statistics,
Carleton University,
Ottawa, Ontario, Canada.} \quad  Abbas Moameni \footnote{School of Mathematics and Statistics,
Carleton University,
Ottawa, Ontario, Canada,
momeni@math.carleton.ca} }

\date{}

\maketitle

\vspace{3mm}

\begin{abstract} We shall prove a multiplicity result for  semilinear elliptic problems with a super-critical  nonlinearity  of the form,
\begin{equation}\label{con-c}
\left \{
\begin{array}{ll}
-\Delta u =|u|^{p-2} u+\mu |u|^{q-2}u, &   x \in \Omega\\
u=0, &  x \in \partial \Omega
\end{array}
\right.
\end{equation}
where $\Omega\subset \R^n$ is a bounded domain with $C^2$-boundary and  $1<q< 2<p.$ As a consequence of our results we shall show that, for each $p>2$, there exists $\mu^*>0$ such that for each $\mu \in (0, \mu^*)$   problem (\ref{con-c}) has a sequence of solutions with a negative energy.  This result was already known for the subcritical values of $p.$  In this paper, we shall extend it to the supercritical values of $p$ as well.
 Our methodology is based on  a new variational principle established by one of the authors  that allows one to deal   with problems beyond the usual locally compactness structure.
\end{abstract}

\section{Introduction}

In this paper we consider the semilinear elliptic problem
\begin{equation}\label{eq}
\left\{\begin{array}{ll}
-\Delta u =u | u|^{p-2}+\mu u | u| ^{q - 2}, &  \mbox{ in } \Omega, \\
u = 0, &   \mbox{ on } \pOm,
\end {array}\right.
\end{equation}
where $\Omega \subset \mathbb{R}^{n}$ is a bounded domain with $C^{2}$-boundary,  $\mu \in \mathbb{R}^+$ and   $1 < q < 2<p. $
This problem has received a lot of attention since being first investigated by Ambrosetti, Brezis and Cerami in \cite{Ambro}.   Using the method of sub-super solutions, it is proved in \cite{Ambro} that there exists $\Lambda > 0$ such that (\ref{eq}) has a positive
solution $\underline{u}_{\mu}$ for $0 < \mu \leq \Lambda$. The importance of their results lies in the fact that  $p$ can be arbitrarily large. If in addition $p < 2^{*} := 2n/(n-2),$ then solutions of (\ref{eq}) correspond
to critical points of the functional
\begin{equation}
I(u) = \frac{1}{2}\int_{\Omega}| \nabla u |^{2} -\frac{1}{p} \int_{\Omega} | u | ^{p} dx - \frac{\mu}{q} \int_{\Omega} | u | ^{q} dx,
\end{equation}
defined on $H_{0}^{1}(\Omega)$, and hence variational methods may be  applied. In this  case a second positive solution $\overline{u}_{\mu}$ exists for $0 < \mu \leq \Lambda $ as  shown in \cite{Ambro}, Theorem 2.3. Moreover, there exists $\Lambda > 0$
such that for every $0 < \mu < \Lambda$ problem (\ref{eq}) has infinitely many solutions $\{\underline{u}_{\mu, j}\}_{j\in \mathbb{N}}$ satisfying $I(\underline{u}_{\mu, j}) < 0$, and there exist infinitely many solutions $\{\overline{u}_{\mu, j}\}_{j\in \mathbb{N}}$ satisfying $I(\overline{u}_{\mu, j}) > 0$.  In fact, they showed that there exists an additional pair of solutions (which can change sign) for all $0<\mu<\mu^{*}$ with $\mu^{*}$ possibly smaller than $\Lambda$ (see also Ambrosetti, Azorero
and Peral \cite{Azorero} and  references therein).  Their method relied on the standard methods in the  critical point theory.
\\
Over the years, the study for the number of positive solutions  were furthered by many authors including  \cite{Adimurthi,Damascelli, Ouyang, Tang}.  It was indeed established that   if  $1 < q < 2 < p \leq   2^{*}$ then  there exists $\mu^{*} > 0$ such that for $0<\mu<\mu^{*}$, there are exactly two positive solutions of (\ref{eq}), exactly one positive solution for $\mu = \mu^{*}$ and no positive solution exists for $\mu > \mu^{*}$, when $\Omega$ is the unit ball in $\mathbb{R}^{n}$. 
\\
In \cite{Ambro} and \cite{Garcia} the existence of solutions with negative energy has also
been proved in the critical case p = 2* provided $\mu> 0$ is small enough. 
%It is not clear whether this remains true for large $\mu$ because the local Palais-Smale
%condition $(PS)$ fails for large $\mu$ even if $c < 0$, see \cite{Garcia} for a discussion of this.

Also, Bartsch and Willem \cite{Willem} showed that   for the subcritical case $\mu^{*} = \infty$ and  $I(\underline{u}_{\mu, j}) \rightarrow 0$  as  $j \rightarrow\infty$.
 In addition they proved that a sequence of solutions $\{\overline{u}_{\mu, j}\}$ with a positive energy  also exists for $\mu \leq 0$. Furthermore, Wang \cite{Wang}
proved that the solutions $\underline{u}_{\mu, j}$ not only tend to $0$ energetically but also uniformly on $\Omega$. Wang
even dealt with more general classes of nonlinear functions $f_{\mu}(u)$ instead of just $u | u| ^{p - 2} +\mu u | u| ^{q - 2}$. The
variational structure and the oddness of the nonlinearity, however, are essential to obtain
infinitely many solutions $\{\underline{u}_{\mu, j}\}$ and $\{\overline{u}_{\mu, j}\}$ for the subcritical case.\\

Our main objective in this paper is to prove multiplicity results without imposing any growth condition on the nonlinearity $u|u|^{p-2}.$
We shall now state our result in this paper regarding positive solutions of (\ref{eq}). 
\begin{theorem}\label{main1}
Assume that $1 < q < 2 < p$. Then  there exists $\mu ^{*} > 0$ such that for each $\mu \in (0,\mu ^{*})$ problem (\ref{eq}) has at least one  positive  solution $\overline{u} \in W^{2, n}(\Omega)$ with a negative energy. 
\end{theorem}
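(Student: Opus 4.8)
The plan is to bypass the failure of the Sobolev embedding $H_0^1(\Omega)\hookrightarrow L^p(\Omega)$ in the supercritical range by minimizing the natural energy not over all of $H_0^1(\Omega)$, where it is ill defined, but over the convex, weakly closed set
\[
K=\{u\in H_0^1(\Omega):\ 0\le u(x)\le M \text{ a.e. in }\Omega\}
\]
for a constant $M>0$ to be fixed below. On $K$ the functional
\[
I(u)=\frac12\int_\Omega|\nabla u|^2-\frac1p\int_\Omega u^p-\frac{\mu}{q}\int_\Omega u^q
\]
is well defined (the nonlinear terms are bounded by $M^p|\Omega|$ and $M^q|\Omega|$), coercive in the $H_0^1$-norm, and weakly lower semicontinuous: the Dirichlet term is convex, while the lower-order terms pass to the limit by dominated convergence along a weakly convergent sequence, using the compact embedding $H_0^1\hookrightarrow L^2$ and the uniform bound $u\le M$. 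Hence $I$ attains its minimum on $K$ at some $\bar u$. Testing against $t\phi_1$ with $\phi_1>0$ the first eigenfunction and $t>0$ small, the term $-\frac{\mu}{q}t^q\|\phi_1\|_q^q$ dominates because $q<2$, so $I(t\phi_1)<0$; therefore $\min_K I<0$ and in particular $\bar u\neq 0$.

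The heart of the argument is to promote the constrained minimizer $\bar u$ to a genuine weak solution, and here I would invoke the variational principle quoted in the abstract. Set $\hat u=(-\Delta)^{-1}f(\bar u)$ with $f(\bar u)=\bar u^{p-1}+\mu\bar u^{q-1}\ge 0$, where $(-\Delta)^{-1}$ is the solution operator with zero Dirichlet data. Since $\bar u$ minimizes $I$ over the convex set $K$, the first-order condition is the variational inequality $\int_\Omega\nabla\bar u\cdot\nabla(v-\bar u)\ge\int_\Omega f(\bar u)(v-\bar u)$ for all $v\in K$; rewriting the right-hand side through $\hat u$ turns this into $\int_\Omega\nabla(\bar u-\hat u)\cdot\nabla(v-\bar u)\ge 0$ for all $v\in K$. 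The decisive point is that \emph{if} $\hat u\in K$, then the choice $v=\hat u$ forces $\|\nabla(\bar u-\hat u)\|_2^2\le 0$, i.e. $\bar u=\hat u$, so that $\bar u$ solves $-\Delta\bar u=f(\bar u)$ in all of $H_0^1(\Omega)$ and not merely in the variational-inequality sense.

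It therefore remains to verify the membership $\hat u\in K$, which is exactly where $\mu^*$ and $M$ are produced, and which I expect to be the main obstacle. Since $f(\bar u)\in L^\infty$ with $\|f(\bar u)\|_\infty\le M^{p-1}+\mu M^{q-1}$, comparison with the torsion function gives $\|\hat u\|_\infty\le C_\Omega(M^{p-1}+\mu M^{q-1})$, where $C_\Omega=\|(-\Delta)^{-1}1\|_\infty$, while $\hat u\ge 0$ by the maximum principle. To close the bound $\|\hat u\|_\infty\le M$ I would first fix $M$ so small that $C_\Omega M^{p-2}\le\frac12$ (possible because $p>2$), and then set $\mu^*=M^{2-q}/(2C_\Omega)$, so that for every $\mu\in(0,\mu^*)$ one has $C_\Omega\mu M^{q-1}\le M/2$ and hence $\|\hat u\|_\infty\le M$, i.e. $\hat u\in K$. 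This simultaneous use of $p-2>0$ and $2-q>0$ is precisely what the concave–convex structure buys. With $\hat u\in K$ established, the previous paragraph yields $\bar u=\hat u$; finally $f(\bar u)\in L^\infty\subset L^n$ together with the $C^2$-regularity of $\partial\Omega$ and the Calderón–Zygmund estimates gives $\bar u\in W^{2,n}(\Omega)$, the strong maximum principle upgrades $\bar u\ge 0,\ \bar u\not\equiv 0$ to $\bar u>0$ in $\Omega$ (so the truncation is inactive and $\bar u$ solves the original problem), and $I(\bar u)<0$ is the asserted negative energy.
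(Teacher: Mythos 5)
Your proof is correct, and its skeleton coincides with the paper's: minimize the energy over a weakly closed convex set $K$, extract the variational inequality at the minimizer (this is Proposition \ref{minimum}), solve the linear problem $-\Delta \hat u = D\Phi(\bar u)$, show $\hat u \in K$, and then force $\bar u = \hat u$ by testing the inequality with $v=\hat u$ --- your cancellation $\int_\Omega \nabla(\bar u - \hat u)\cdot\nabla(\hat u - \bar u)\ge 0$ is exactly the proof of Theorem \ref{con2}, there phrased via convexity of $\Psi$. Where you genuinely diverge is the choice of $K$. The paper takes the non-negative part of a ball $\|u\|_{W^{2,n}(\Omega)} \le r$ in $V = H_0^1(\Omega)\cap L^p(\Omega)$, estimates $\|D\Phi(u)\|_{L^n} \le C_1 r^{p-1} + \mu C_2 r^{q-1}$ through the embeddings $W^{2,n}\hookrightarrow L^{n(p-1)}, L^{n(q-1)}$ and the elliptic estimate of Corollary \ref{p-est}, and closes the invariance with the scalar inequality $C_1 r^{p-1}+\mu C_2 r^{q-1}\le r$ (Lemmas \ref{3.2}--\ref{3.3}); you instead take the order box $0\le u\le M$ in plain $H_0^1(\Omega)$, close the structurally identical inequality $C_\Omega(M^{p-1}+\mu M^{q-1})\le M$ by comparison with the torsion function, and recover $W^{2,n}$ regularity only a posteriori by Calder\'on--Zygmund. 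Both invariance conditions exploit $q<2<p$ in the same way, and your $\mu^*$ comes out explicitly, which is a small bonus. What each approach buys: your box is more elementary (no $W^{2,n}$ embedding constants, no auxiliary space $H_0^1\cap L^p$, and an $L^\infty$ bound for free), but it is not symmetric under $u\mapsto -u$, so it cannot be reused for the genus-based multiplicity argument; the paper's $W^{2,n}$ ball is symmetric precisely so that the same $K(r_2)$ and the same $\mu^*$ serve both Theorem \ref{main1} and Theorem \ref{main2}. Two points you should state explicitly: the directional differentiability of the $u^p, u^q$ terms along segments inside $K$ (legitimate by dominated convergence since everything is trapped in $[0,M]$), and that inactivity of the obstacle $u\le M$ is never needed --- the identity $\bar u = \hat u$ already yields the exact equation rather than an obstacle problem, so your closing remark about the truncation being inactive is redundant, though true.
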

This result, however,  is already known in \cite{Ambro}.  Here we shall provide a different approach based on variational principles on convex closed sets.  The next result concerns with the multiplicity of solutions for the super-critical case.  The next theorem addresses the multiplicity result for the super-critical case.

\begin{theorem}\label{main2}
Assume that $1 < q < 2 < p$. Then there exists $\mu ^{*} > 0$ such that for each $\mu \in (0,\mu ^{*})$ problem (\ref{eq}) has infinitely many distinct nontrivial solutions with a negative energy. %Indeed,
%for each $\mu \in (0,\mu ^{*})$, there exist positive numbers $r_{1},  r_{2} \in \mathbb{R}$ with $r_{1} < r_{2}$ such that for each $r \in  [r_{1}, r_{2}]$ the problem (\ref{eq}) has a solution $u\in K(r)$ with $I_K(u) < 0$.
 \end{theorem}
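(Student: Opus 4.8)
The plan is to produce the solutions by a $\mathbb{Z}_2$-symmetric min-max argument carried out \emph{not} on $H_0^1(\Omega)$, on which the supercritical term $\int_\Omega|u|^p$ need not even be finite, but on a convex, closed and symmetric subset $K$ where the nonlinearity is tamed. Concretely, I would fix $M>0$ and set $K=\{u\in H_0^1(\Omega):\ \|u\|_{L^\infty(\Omega)}\le M\}$, on which
\[
I(u)=\tfrac12\int_\Omega|\nabla u|^2\,dx-\tfrac1p\int_\Omega|u|^p\,dx-\tfrac\mu q\int_\Omega|u|^q\,dx
\]
is well defined, even, of class $C^1$, bounded below and coercive in the $H_0^1$-norm, since on $K$ both $\int_\Omega|u|^p$ and $\int_\Omega|u|^q$ are bounded by constants depending only on $M$ and $|\Omega|$. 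The point of working on $K$ is that a constrained critical point need not solve the Euler--Lagrange equation; this gap is closed by the variational principle on convex sets established by one of the authors, which asserts that a critical point $u$ of $I$ relative to $K$ satisfying the associated pointwise invariance condition is in fact a free critical point of $I$, hence a weak solution of (\ref{eq}). Elliptic regularity (Calder\'on--Zygmund, since $|u|^{p-2}u+\mu|u|^{q-2}u\in L^\infty$ for $u\in K$) then upgrades it to $W^{2,n}(\Omega)$, exactly as in the minimization instance underlying Theorem \ref{main1}.

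For the multiplicity I would run a Clark / dual-fountain type scheme adapted to $K$. Using the Krasnoselskii genus $\gamma$, set
\[
c_k=\inf_{\substack{A\subset K\ \text{symmetric, compact}\\ 0\notin A,\ \gamma(A)\ge k}}\ \sup_{u\in A} I(u),\qquad k\in\mathbb{N}.
\]
Since $I$ is bounded below on $K$ each $c_k$ is finite, and $c_k\le c_{k+1}$. The decisive sign information comes from the concave term: because $q<2<p$, for any finite-dimensional subspace $F_k\subset H_0^1(\Omega)\cap L^\infty(\Omega)$ and all small $t>0$ one has $I(tu)<0$ uniformly for $u$ in the unit sphere of $F_k$, the contribution $-\tfrac\mu q t^q\int_\Omega|u|^q$ dominating $\tfrac{t^2}2\|u\|^2$ and $-\tfrac{t^p}p\int_\Omega|u|^p$ near the origin. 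For $t$ small the corresponding sphere lies in $K$ (on $F_k$ the $L^\infty$-norm is controlled by the $H_0^1$-norm), so it is an admissible set of genus $k$ and $c_k<0$ for every $k$. A standard finite-genus estimate for the sublevel sets $\{I\le-\varepsilon\}$, combined with compactness, then forces $c_k\uparrow 0^-$.

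Next I would check that each $c_k$ is a critical value. The Palais--Smale condition on $K$ should be comparatively mild: a $(PS)$ sequence is bounded in $H_0^1(\Omega)$ by coercivity, its weak limit lies in $K$ (convex and closed, hence weakly closed, with the pointwise bound $|u|\le M$ preserved along an a.e.-convergent subsequence), and since the nonlinear terms are uniformly bounded on $K$ the embedding $H_0^1(\Omega)\hookrightarrow L^2(\Omega)$ upgrades weak to strong convergence. With $(PS)$ in hand, the even deformation lemma on the convex set $K$ (the pseudo-gradient flow kept inside $K$ by projection) produces at each level $c_k$ a critical point $u_k$ of $I$ relative to $K$; when several $c_k$ coincide the genus of the critical set is at least two, so infinitely many distinct critical points arise in any case, each with $I(u_k)=c_k<0$.

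The main obstacle, and the step I would treat most carefully, is ensuring that these constrained critical points are genuine solutions of the \emph{super}critical problem, i.e.\ that the constraint $\|u\|_{L^\infty}\le M$ is inactive. This is precisely where the negative, near-zero energy is used: because $c_k\to0^-$, the associated solutions cluster at the origin, and an a priori bound obtained by testing the equation together with a De Giorgi--Nash--Moser / bootstrap estimate gives $\|u_k\|_{L^\infty}\to0$. Hence for all large $k$ one has $\|u_k\|_{L^\infty}<M$ strictly, so $u_k$ lies in the interior of $K$ and is automatically a free critical point of $I$, solving $-\Delta u=|u|^{p-2}u+\mu|u|^{q-2}u$ with no growth restriction on $p$. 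Verifying the pointwise invariance condition at any boundary critical points, and making the symmetric deformation compatible with the projection onto $K$, are the two remaining technical points needed to complete the argument.
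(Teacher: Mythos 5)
Your overall architecture (symmetric genus-based min-max on a convex, closed, symmetric constraint set, then conversion of constrained critical points into free solutions) matches the paper's strategy, but the mechanism you propose for the decisive conversion step has a genuine gap, and it is circular. You plan to show the constraint $\|u\|_{L^\infty}\le M$ is \emph{inactive} for large $k$ by arguing $c_k\to 0^-$, hence $\|u_k\|_{L^\infty}\to 0$ via ``testing the equation'' and bootstrap. But a constrained critical point on the boundary of $K$ satisfies only the variational inequality of Definition \ref{dd}, not the Euler--Lagrange equation; there is no equation to test or bootstrap until the constraint is already known to be inactive, which is what you are trying to prove. You acknowledge this by deferring ``the pointwise invariance condition at any boundary critical points,'' but that deferred point is the entire content of the theorem in the supercritical regime --- everything else (genus spheres in finite-dimensional subspaces, $c_k<0$, boundedness below) is routine. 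Note also that smallness of $\mu$ never enters your argument, whereas the conclusion is known to fail for large $\mu$; any correct proof must use $\mu<\mu^*$ somewhere, and yours has no place for it.

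The paper closes exactly this gap without any inactivity claim, and for \emph{every} critical point including boundary ones: it takes $K=K(r)=\{u: \|u\|_{W^{2,n}(\Omega)}\le r\}$ rather than an $L^\infty$-ball, and verifies the pointwise invariance condition uniformly on $K(r)$ by solving $-\Delta v = D\Phi(\bar u)$ and estimating, via elliptic regularity and the embeddings of Lemma \ref{3.2}, $\|v\|_{W^{2,n}(\Omega)}\le C_1 r^{p-1}+\mu C_2 r^{q-1}$; the inequality $C_1 r^{p-1}+\mu C_2 r^{q-1}\le r$ is solvable precisely when $\mu<\mu^*$ (Lemma \ref{300}, which is where $\mu^*$ comes from), giving $v\in K(r)$ (Lemma \ref{3.3}). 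Theorem \ref{con2} then forces $v=\bar u$ by convexity of the Dirichlet energy, so each Szulkin critical point solves (\ref{eq}) regardless of whether it touches $\partial K$, and no $c_k\to 0^-$ or $L^\infty$-decay estimate is needed. A secondary issue in your sketch: your (PS) verification is incorrect as stated --- weak $H_0^1$ convergence plus the compact embedding into $L^2$ does not yield strong convergence in $H_0^1$, and your $L^\infty$-ball is not compact in $H_0^1$; the paper instead gets (PS) for free because $K(r)$ is bounded in $W^{2,n}(\Omega)$, which embeds compactly into $H_0^1(\Omega)\cap L^p(\Omega)$. If you wanted to salvage your $L^\infty$-ball, the viable repair is not inactivity but the paper's invariance argument run in $L^\infty$ (maximum-principle estimate $\|v\|_{L^\infty}\le C(\Omega)\bigl(M^{p-1}+\mu M^{q-1}\bigr)\le M$ for small $M,\mu$), though compactness would still need separate treatment.
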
 
 As there is no upper bound for $p$ in Theorem \ref{main2},  thus, this theorem will be  an extension of a  similar result by Ambrosetti-Brezis-Cerami \cite{Ambro}   to  the supercritical case.
\begin{remark} Note that  the term $u | u|^{p-2}$  can be substituted by any super-linear  odd function $f$ that behaves like $f(u)=u | u|^{p-2}$ around $u=0$ and around $u=+\infty.$ The oddness of $f$ is not required in Theorem \ref{main1}, however, f has to be positive on $(0,\infty).$  We would also like to remark that the parameter $\mu^*$ is the same in both Theorems \ref{main1} and \ref{main2}. It is also worth nothing that, there exists $\Lambda \in (0, \infty)$ such that  problem  (\ref{eq}) does not have any solution for $\lambda >\Lambda$ (See Theorem 2.1 in \cite{Ambro}).
\end{remark}

We shall be proving Theorems \ref{main1} and \ref{main2} by making use of  a new abstract variational principle established recently in \cite{Mo5, Mo1} (see also \cite{Mo,  Mo2} for some new variational principles and \cite{ACL} for an application in super-critical Neumann problems).
To be more specific,
let $V$ be a reflexive Banach space, $V^*$ its topological dual  and let $K$ be a convex and weakly closed subset of $V$.
Assume that  $\Psi : V \rightarrow \mathbb{R} \cup \{+\infty\}$ is a proper, convex, lower semi-continuous function and G\^ateaux differentiable on K (with G\^ateaux derivative $D\Psi(u)$). The restriction of $\Psi$ to $K$ is denoted by $\Psi_K$ and defined by
\begin{eqnarray}
\Psi_K(u)=\left\{
  \begin{array}{ll}
      \Psi(u), & u \in K, \\
    +\infty, & u \not \in K.
  \end{array}
\right.
\end{eqnarray}
For a given functional $\Phi \in C^{1}(V, \mathbb{R})$, consider the functional $I_K: V \to (-\infty, +\infty]$ defined by
 \begin{eqnarray*}
 I_K(u):= \Psi_K(u)-\Phi(u).
 \end{eqnarray*}
According to Szulkin \cite{szulkin}, we have the following definition for critical points of $I_K$.

\begin{definition}\label{ddd}
A point $u\in  V$ is said to be a critical point of $I_K$ if $I_K(u) \in \mathbb{R}$ and if it satisfies the following inequality
\begin{equation}
 \CO{D \Phi(u)}{ u-v} + \Psi_K(v)- \Psi_K(u) \geq 0, \qquad \forall v\in V.
\end{equation}
\end{definition}

We shall now recall the following variational principle established recently in \cite{Mo5}.
\begin{theorem}\label{con22}
Let $V$ be a reflexive Banach space and $K$ be a convex and weakly closed subset of $V$. Let $\Psi : V \rightarrow \mathbb{R}\cup \{+\infty\}$ be a convex, lower semi-continuous function which is G\^ateaux differentiable on $K$ and $\Phi \in C^{1}(V, \mathbb{R})$. If the following two assertions hold:
\begin{enumerate}
\item[$(i)$] The functional $I_K: V \rightarrow \mathbb{R} \cup \{+\infty\}$ defined by  $I_K(u):= \Psi_K(u)-\Phi(u)$ has a critical point $u_{0}\in V$ as in Definition \ref{ddd}, and; 

\item[$(ii)$] there exists $v_{0}\in K$ such that $D \Psi(v_{0}) = D \Phi(u_{0})$.
\end{enumerate} 
Then $u_{0}\in K$ is a solution of the equation 
 \begin{equation} \label{equ1} D \Psi(u) = D \Phi(u). \end{equation}
 \end{theorem}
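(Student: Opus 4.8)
The plan is to freeze the point $u_0$ and reduce the whole statement to a minimization problem for a single convex functional. First I would record that $u_0\in K$: since $u_0$ is a critical point of $I_K$ in the sense of Definition \ref{ddd}, we have $I_K(u_0)\in\mathbb{R}$, and because $\Phi(u_0)$ is finite this forces $\Psi_K(u_0)<+\infty$, i.e. $u_0\in K$. In particular $\Psi$ is G\^ateaux differentiable at $u_0$, so all the derivatives appearing below are meaningful. This already establishes the first half of the conclusion.

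Next I would introduce the auxiliary functional $g:V\to\mathbb{R}\cup\{+\infty\}$ defined by $g(v):=\Psi(v)-\CO{D\Phi(u_0)}{v}$, which is convex (as $\Psi$ is convex and the second term is linear) and G\^ateaux differentiable on $K$ with $Dg(u)=D\Psi(u)-D\Phi(u_0)$, since $D\Phi(u_0)$ is a fixed element of $V^*$. The key observation is that the critical point inequality of Definition \ref{ddd}, restricted to test points $v\in K$ where $\Psi_K(v)=\Psi(v)$ and $\Psi_K(u_0)=\Psi(u_0)$, reads
\[
\Psi(v)-\CO{D\Phi(u_0)}{v}\ \ge\ \Psi(u_0)-\CO{D\Phi(u_0)}{u_0},\qquad \forall v\in K,
\]
which says precisely that $u_0$ minimizes $g$ over the convex set $K$.

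Then I would bring in hypothesis $(ii)$. Since $D\Psi(v_0)=D\Phi(u_0)$, the point $v_0\in K$ satisfies $Dg(v_0)=0$; as $g$ is convex, the subgradient inequality $g(v)\ge g(v_0)+\CO{Dg(v_0)}{v-v_0}=g(v_0)$ for all $v\in V$ shows that $v_0$ is in fact a \emph{global} minimizer of $g$ on $V$. Comparing the two minimality properties and using $v_0\in K$ gives $g(u_0)\le g(v_0)\le g(u_0)$, hence $g(u_0)=g(v_0)=\min_V g$; thus $u_0$ is itself a global minimizer of the convex functional $g$.

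Finally I would invoke the first-order optimality condition at $u_0$: for any direction $w\in V$ the scalar function $t\mapsto g(u_0+tw)$ has a minimum at $t=0$, so its one-sided difference quotients yield $\CO{Dg(u_0)}{w}\ge 0$ and, after replacing $w$ by $-w$, $\CO{Dg(u_0)}{w}=0$ for every $w$. Hence $Dg(u_0)=0$, that is $D\Psi(u_0)=D\Phi(u_0)$, which is exactly equation (\ref{equ1}). The step I expect to demand the most care is the passage from $Dg(v_0)=0$ to the global minimality of $v_0$ (and then of $u_0$): this is where convexity is indispensable, and one must make sure the subgradient inequality is applied on all of $V$ and that the G\^ateaux derivative genuinely controls the difference quotients in every direction at the constrained point $u_0$ — which is automatic when $\Psi$ is finite near $K$, as in the intended applications.
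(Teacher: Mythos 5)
Your proof is correct, and it in fact establishes Theorem \ref{con22} in full generality --- something the paper itself does not do: for the general statement the authors simply cite \cite{Mo5}, and what they prove is only the special case Theorem \ref{con2}, where $V=H_0^1(\Omega)\cap L^p(\Omega)$ and $\Psi(u)=\frac{1}{2}\int_\Omega|\nabla u|^2dx$. The two arguments open identically: test the critical-point inequality of Definition \ref{ddd} with the auxiliary point ($v_0$, resp.\ $\bar v$) and pair it with the subgradient inequality for the convex $\Psi$ at that point. They then diverge. The paper adds the two inequalities to force \emph{equality} in the convexity inequality and exploits the strict convexity of the Dirichlet energy: equality in $\frac{1}{2}\|\nabla\bar u\|_{L^2}^2-\frac{1}{2}\|\nabla\bar v\|_{L^2}^2\geq\int_\Omega\nabla\bar v\cdot\nabla(\bar u-\bar v)\,dx$ yields $\int_\Omega|\nabla(\bar u-\bar v)|^2dx=0$, hence $\bar v=\bar u$, after which the equation holds at once because $-\Delta\bar v=D\Phi(\bar u)$ by hypothesis $(ii)$. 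You avoid strict convexity altogether: recasting both hypotheses as statements about the convex functional $g(v)=\Psi(v)-\langle D\Phi(u_0),v\rangle$, you show $u_0$ minimizes $g$ over $K$, that $v_0$ minimizes $g$ globally (via $Dg(v_0)=0$ and the fact that the G\^ateaux derivative of a convex function is a subgradient), hence that $u_0$ is itself a global minimizer, and then differentiate at $u_0$ to get $D\Psi(u_0)=D\Phi(u_0)$. What each buys: the paper's route is shorter, needs differentiability of $\Psi$ only at the auxiliary point, and delivers the extra information $\bar v=\bar u$, but it is tied to a strictly convex (indeed quadratic) $\Psi$; your route covers any convex $\Psi$ --- where $v_0$ and $u_0$ need not coincide --- at the cost of invoking G\^ateaux differentiability of $\Psi$ at $u_0$ in every direction, the caveat you rightly flag and which is harmless both under the theorem's stated hypothesis ($\Psi$ differentiable on $K$, and $u_0\in K$ as you deduce from $I_K(u_0)\in\mathbb{R}$) and in the paper's application, where $\Psi$ is finite and smooth on all of $V$.
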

 
For the convenience of the reader, by choosing the functions $\Psi,$ $\Phi$ and the convex set $K$ in lines with problem (\ref{eq}),   we shall provide a proof to a particular case of Theorem \ref{con22} applicable to this problem.\\

In the next section we shall recall some preliminaries from convex analysis, critical point theory and Elliptic regularity theory. Section 3 is devoted to the proof of Theorems \ref{main1} and \ref{main2}.

\section{Preliminaries} \label{convex}

In this section we recall some important definitions and results from convex analysis \cite{Ek-Te} and partial differential equations \cite{G-T}.

Let $V$ be a  real Banach  space and $V^*$ its topological dual  and let $\langle .,. \rangle $ be the pairing between $V$ and $V^*.$
The weak topology on $V$ induced by $\langle .,. \rangle $ is denoted by $\sigma(V,V^*).$  A function $\Psi : V \rightarrow \mathbb{R}$ is said to be weakly lower semi-continuous if
\[\Psi(u) \leq \liminf_{n\rightarrow \infty} \Psi(u_n),\]
for each $u \in V$ and any sequence ${u_n} $ approaching $u$ in the weak topology $\sigma(V,V^*).$
Let $\Psi : V \rightarrow \mathbb{R}\cup \{\infty\}$ be a proper (i.e. $Dom(\Psi)=\{v \in V; \, \Psi(v)< \infty\} \neq \emptyset$) convex  function. The subdifferential $\partial \Psi $ of $\Psi$
is defined  to be the following set-valued operator: if $u \in Dom (\Psi),$ set
\[\partial \Psi (u)=\{u^* \in V^*; \langle u^*, v-u \rangle + \Psi(u) \leq \Psi(v) \text{  for all  } v \in V\}\]
and if $u \not \in Dom (\Psi),$ set $\partial \Psi (u)=\varnothing.$ If $\Psi$ is G\^ateaux differentiable at $u,$ denote by $D \Psi(u)$ the derivative of $\Psi$ at $u.$ In this case  $\partial \Psi (u)=\{ D  \Psi(u)\}.$\\
 
 Let $I$ be a function on $V$ satisfying the following hypothesis:
 \\
\noindent
{(H):}$\quad$ \emph{$I= \Psi - \Phi  $, where $\Phi\in C^1(V,\mathbb{R})$ and $\Psi: V\rightarrow (-\infty, +\infty]$ is proper, convex and lower semi-continuous.
}
\\
\begin{definition}\label{dd}A point $u\in V$ is said to be a critical point of $I$ if $u\in Dom(\Psi)$ and if it satisfies
the inequality
\begin{equation}\label{critical point}
 \CO{D \Phi(u)}{ u-v} + \Psi(v)- \Psi(u) \geq 0, \qquad \forall v\in V.
\end{equation}
\end{definition}
Note that a function satisfying (\ref{critical point}) is indeed a solution of the inclusion $D \Phi(u) \in \partial \Psi(u)$.

\begin{proposition}\label{minimum}
If $I$ satisfies (H), each local minimum is necessarily a critical point of $I$.
\end{proposition}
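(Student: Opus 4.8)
The plan is to show directly that a local minimum $u$ of $I$ satisfies the variational inequality of Definition \ref{dd}. First I would observe that since $u$ is a local minimum, $I(u)$ is finite, and hence $u \in Dom(\Psi)$; this is what gives meaning to the term $\Psi(u)$ in the inequality. Fixing an arbitrary $v \in V$, if $v \notin Dom(\Psi)$ then $\Psi(v) = +\infty$ and the desired inequality holds trivially, so I may restrict attention to $v \in Dom(\Psi)$.

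The core idea is to test minimality along the segment joining $u$ to $v$. For $t \in (0,1]$ set $w_t := (1-t)u + tv = u + t(v-u)$. Since $w_t \to u$ as $t \to 0^+$, for all sufficiently small $t$ the point $w_t$ lies in the neighborhood on which $u$ minimizes $I$, so $I(u) \le I(w_t)$, that is, $\Psi(u) - \Phi(u) \le \Psi(w_t) - \Phi(w_t)$. Invoking the convexity of $\Psi$ gives $\Psi(w_t) \le (1-t)\Psi(u) + t\Psi(v)$; substituting this bound and rearranging yields $\Phi(w_t) - \Phi(u) \le t\bigl(\Psi(v) - \Psi(u)\bigr)$.

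Dividing by $t > 0$ and letting $t \to 0^+$ is the final step: the left-hand difference quotient converges to the directional derivative $\CO{D\Phi(u)}{v-u}$ because $\Phi \in C^1(V,\mathbb{R})$, which produces $\CO{D\Phi(u)}{v-u} \le \Psi(v) - \Psi(u)$, equivalently $\CO{D\Phi(u)}{u-v} + \Psi(v) - \Psi(u) \ge 0$. As $v$ was arbitrary, this is exactly the critical point inequality of Definition \ref{dd}.

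I do not expect a serious obstacle here, as the argument is the standard convexity/first-order necessary condition computation. The only points requiring minor care are ensuring that $w_t$ remains in the minimizing neighborhood for small $t$, which is guaranteed by the continuity of $t \mapsto w_t$ at $t = 0$, and justifying the passage to the limit in the difference quotient, which relies solely on the G\^ateaux differentiability of $\Phi$ supplied by the hypothesis $\Phi \in C^1$. It is worth noting that the lower semicontinuity of $\Psi$ plays no role in this direction; only the convexity of $\Psi$ and the finiteness of $\Psi(u)$ (a consequence of $u$ being a local minimum) are used.
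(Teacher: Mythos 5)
Your proof is correct and follows essentially the same argument as the paper: test minimality along the segment $(1-t)u+tv$, apply the convexity of $\Psi$, divide by $t$, and let $t\to 0^+$ using $\Phi\in C^1(V,\mathbb{R})$. Your added remarks (handling $v\notin Dom(\Psi)$ trivially, noting $u\in Dom(\Psi)$, and observing that lower semicontinuity of $\Psi$ is not used) are minor refinements the paper leaves implicit, and your displayed inequalities even correct a small sign slip in the paper's computation.
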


\begin{proof} Let $u$ be a local minimum of $I$. Using convexity of $\Psi$, it follows that for all small $t > 0$,
\begin{align*}
0 \leq I\left((1 -t)u + tv\right) - I(u) &= \Phi\left(u + t(v -u)\right) - \Phi(u) + \Psi\left((1 -t)u + tv\right) - \Psi(u)\\
&\leq \Phi\left(u + t(v -u)\right) - \Phi(u) + t\left(\Psi(v) - \Psi(u)\right). 
\end{align*}
Dividing by $t$ and letting $t\rightarrow 0^+$ we obtain (\ref{critical point}).
\end{proof} 

The critical point theory for functions of the type $(H)$ was established by Szulkin in \cite{szulkin}.  According to \cite{szulkin}, say  that $I$ satisfies the  compactness condition of Palais-Smale type provided,
\\
\noindent
{(PS):}$\quad$ \emph{If $ \{u_{n}\}$ is a sequence such that $I(u_{n})\rightarrow c \in \mathbb{R}$ and 
\begin{align} \label{ps}
\langle D\varphi (u_{n}), u_{n} - v \rangle + \psi (v) - \psi(u_{n}) \geq -\epsilon_{n} \| v - u_{n}\|_{V}\qquad \forall v \in V,
\end{align}
where $\epsilon_{n}\rightarrow 0$, then $\{u_{n}\}$  possesses a convergent subsequence}. 
\\

In the following we recall  an important result  about critical points of even functions of the type $(H)$. We shall begin with some preliminaries. Let $\Sigma$ be the of all symmetric subsets of $V\setminus \{0\}$ which are closed in $V$. A nonempty set $A \in \Sigma$ is said to have \emph{genus k} (denoted $\gamma(A) = k$) if $k$ is the smallest integer with the property that there exists an odd continuous mapping $h : A\rightarrow \mathbb{R}^{k} \setminus\{0\}$. If such an integer does not exist, $\gamma(A) = \infty$. For the empty set $\emptyset$ we define $\gamma(\emptyset) = 0$.

\begin{proposition}\label{genus}
Let $A \in \Sigma$. If $A$ is a homeomorphic to $S^{k -1}$ by an odd homeomorphism, then $\gamma(A) = k$. 
\end{proposition}
Proof and a more detailed discussion of the notion of genus can be found in \cite{Rabinowitz 1} and \cite{Rabinowitz 2}.
\\
Let $\Theta$ be the collection of all nonempty closed and bounded subsets of $V$. In $\Theta$ we introduce the Hausdorff metric distance [\ref{r18},$\S15, VII$], given by 
\begin{align*}
dist(A, B) =\max\{\sup_{a\in A}d(a, B), \sup_{b\in B}d(b, A)\}.
\end{align*}
The space $(\Theta, dist)$ is complete [\ref{r18},$\S29, IV$]. Denote by $\Gamma$ the sub-collection of $\Theta$ consisting of all nonempty compact symmetric subsets of $V$ and let  
\begin{align}\label{Gam}
\Gamma_{j} = cl\{A\in \Gamma : 0\notin A, \gamma(A)\geq j \}
\end{align}
($cl$ is the closure in $\Gamma$). It is easy to verify that $\Gamma$ is closed in $\Theta$, so $(\Gamma, dist)$ and $(\Gamma_{j}, dist)$ are complete metric spaces.
The following Theorem is proved in \cite{szulkin}.
\begin{theorem}\label{critical theorem}
Suppose that $I : V\rightarrow (-\infty, +\infty]$ satisfies (H) and {(PS)}, $I(0) = 0$ and $\Phi$, $\Psi$ are even. Define 
\begin{align*}
c_{j} = \inf_{A\in \Gamma_{j}} \sup_{u \in A} I(u).
\end{align*}
If $-\infty < c_{j} < 0$ for $j = 1, ... ,k$, then $I$ has at least $k$ distinct pairs of nontrivial critical points by   means of Definition \ref{dd}.
\end{theorem}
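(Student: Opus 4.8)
The plan is to run the classical Krasnoselskii--Lusternik--Schnirelman minimax scheme in the nonsmooth setting of hypothesis (H), where the only available calculus is the subdifferential inequality of Definition \ref{dd}. Four ingredients are needed: the standard properties of the genus $\gamma$, the compactness of critical sets furnished by (PS), an \emph{odd} deformation lemma for functionals of type (H), and the monotonicity of the numbers $c_j$. First I would assemble the genus toolkit from \cite{Rabinowitz 1, Rabinowitz 2}: monotonicity ($A\subset B\Rightarrow\gamma(A)\le\gamma(B)$), subadditivity ($\gamma(A\cup B)\le\gamma(A)+\gamma(B)$ for closed symmetric sets), the mapping property (an odd continuous map $A\to B$ forces $\gamma(A)\le\gamma(B)$), the neighborhood property (a compact symmetric $A$ with $0\notin A$ has a closed symmetric neighborhood $\overline N$ with $\gamma(\overline N)=\gamma(A)$), and the fact that $\gamma(A)\ge2$ forces $A$ to be infinite (a finite symmetric set avoiding $0$ has genus $1$); Proposition \ref{genus} provides the normalization underlying these. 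A first consequence, used repeatedly, is that since $c_j<0=I(0)$, any $A\in\Gamma_j$ with $\sup_A I<0$ avoids the origin, and then the neighborhood property together with the definition (\ref{Gam}) of $\Gamma_j$ as a Hausdorff closure yields $\gamma(A)\ge j$.

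Next I would record that, under (PS), the critical set at a finite level $c$,
\[
K_c=\{u\in V:\ I(u)=c,\ u\ \text{critical in the sense of Definition \ref{dd}}\},
\]
is compact: a sequence in $K_c$ satisfies (\ref{ps}) with $\epsilon_n=0$ and $I(u_n)=c$, hence subconverges, and the limit is again critical at level $c$ by lower semicontinuity of $\Psi$ and continuity of $D\Phi$. Since $\Phi,\Psi$ are even, $K_c$ is symmetric, and $0\notin K_c$ whenever $c<0$.

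The analytic crux is an equivariant deformation lemma: for a finite level $c$ and any symmetric neighborhood $N$ of $K_c$, the condition (PS) yields some $\var>0$ and a continuous \emph{odd} map $\eta(1,\cdot):V\to V$, fixing $\{u:I(u)\le c-\var\}$, with
\[
\eta\big(1,\ \{u:I(u)\le c+\var\}\setminus N\big)\ \subset\ \{u:I(u)\le c-\var\}.
\]
This is where the non-smoothness bites: a classical gradient flow is unavailable, so I would follow Szulkin \cite{szulkin} and construct the deformation from the splitting $I=\Psi-\Phi$, exploiting the convexity of $\Psi$ and a locally Lipschitz pseudo-gradient field for the smooth part $\Phi$ on the region where (\ref{ps}) fails, then symmetrizing the resulting flow (legitimate because $\Phi,\Psi$ are even) so that $\eta(1,\cdot)$ is odd. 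I expect this equivariant deformation lemma to be the principal obstacle; I would either quote it directly from \cite{szulkin} or reprove it by the standard odd symmetrization of the flow.

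With these in hand the multiplicity follows by a genus comparison. Since $\Gamma_{j+1}\subset\Gamma_j$ we have $-\infty<c_1\le c_2\le\cdots\le c_k<0$. Suppose a value $c:=c_j=c_{j+1}=\cdots=c_{j+p-1}$ occurs with multiplicity $p\ge1$; I claim $\gamma(K_c)\ge p$. If not, choose an open symmetric neighborhood $N$ of $K_c$ with $\gamma(\overline N)=\gamma(K_c)\le p-1$, shrink $\var$ so that $c+\var<0$, and pick $A\in\Gamma_{j+p-1}$ with $\sup_A I<c+\var$; then $0\notin A$ and $\gamma(A)\ge j+p-1$. Subadditivity gives
\[
\gamma(A\setminus N)\ \ge\ \gamma(A)-\gamma(\overline N)\ \ge\ (j+p-1)-(p-1)=j,
\]
and $B:=\eta\big(1,A\setminus N\big)$ is compact, symmetric and avoids $0$; by the mapping property $\gamma(B)\ge\gamma(A\setminus N)\ge j$, so $B\in\Gamma_j$ and $\sup_B I\ge c_j=c$. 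On the other hand $A\setminus N\subset\{u:I(u)\le c+\var\}\setminus N$, so the deformation forces $B\subset\{u:I(u)\le c-\var\}$ and $\sup_B I\le c-\var<c$, a contradiction. Hence $\gamma(K_c)\ge p$. Finally, if the $c_j$ are pairwise distinct they are $k$ distinct critical values, each $K_{c_j}$ nonempty, symmetric and avoiding $0$, hence containing a pair $\{u,-u\}$ of nontrivial critical points; if some value repeats ($p\ge2$) then $\gamma(K_c)\ge2$ makes $K_c$ infinite, producing infinitely many pairs. In either case $I$ has at least $k$ distinct pairs of nontrivial critical points.
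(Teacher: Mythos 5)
The paper does not prove Theorem \ref{critical theorem} at all --- it states ``The following Theorem is proved in \cite{szulkin}'' (it is Theorem~4.3 of that paper) --- and your proposal is a faithful reconstruction of exactly Szulkin's argument: the genus toolkit, compactness and symmetry of the critical set $K_c$ under (PS) (with the correct observation that criticality passes to limits via continuity of $D\Phi$ and lower semicontinuity of $\Psi$), an odd deformation lemma for functionals of type (H), the comparison $\gamma(K_c)\ge p$ at a level of multiplicity $p$, and, notably, the correct handling of the subtlety that $\Gamma_j$ is defined in (\ref{Gam}) as a Hausdorff-metric closure, resolved via the neighborhood property once $c_j<0=I(0)$ forces sets near the minimax level to avoid the origin. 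The single ingredient you take on faith --- the equivariant deformation lemma, which you rightly flag as the crux since no gradient flow exists and which in \cite{szulkin} is built from segments toward points $v$ witnessing the failure of (\ref{ps}), glued by a partition of unity and exploiting the convexity of $\Psi$, then chosen odd by evenness of $\Phi$ and $\Psi$ --- is precisely what this paper also imports from \cite{szulkin}, so your proposal is correct and follows essentially the same route as the cited source.
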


We shall now recall some notations and results from the theory of  Sobolev spaces and Elliptic regularity required in the sequel. Here is the general Sobolev embedding 
 theorem in  $W^{k, p}(\Omega)$ (see Lemma 7.26 in  \cite{G-T}). 
\begin{theorem}\label{imbedding theorem}
Let $\Omega$ be a bounded $C^{0, 1}$ domain in $\mathbb{R}^{n}$. Then,
\begin{enumerate}
\item[ $(i)$] If $kp < n$, the space $W^{k, p}(\Omega)$ is continuously imbedded in $L^{t^*}(\Omega)$, $t^* = \frac{np}{(n - kp)}$, and compactly imbedded in $L^{q}(\Omega)$ for any $q < t^{*}$.
\item[ $(ii)$] If $0 \leq m < k - \frac{n}{p} < m + 1$, the space $W^{k, p}(\Omega)$ is continuously imbedded in $C^{m, \alpha}(\overline{\Omega})$, $\alpha =  k - \frac{n}{p} - m$, and compactly imbedded in $C^{m, \beta}(\overline{\Omega})$ for any $\beta < \alpha$.
\end{enumerate} 
\end{theorem}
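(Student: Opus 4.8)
The plan is to establish both embeddings on all of $\mathbb{R}^n$ first and then transfer them to $\Omega$ by extension. Since $\Omega$ is a bounded $C^{0,1}$ (Lipschitz) domain, there is a bounded linear extension operator $E : W^{k,p}(\Omega) \to W^{k,p}(\mathbb{R}^n)$ with $Eu = u$ on $\Omega$ and $\|Eu\|_{W^{k,p}(\mathbb{R}^n)} \le C\|u\|_{W^{k,p}(\Omega)}$; the first step is to construct $E$ by a partition of unity, local flattening of the boundary, and reflection. Granting $E$, every continuous embedding proved for $W^{k,p}(\mathbb{R}^n)$ restricts to one for $W^{k,p}(\Omega)$, and because $C_c^\infty(\mathbb{R}^n)$ is dense, all the inequalities may be established first on smooth compactly supported functions and then extended by density.

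For part $(i)$ the heart of the matter is the Gagliardo--Nirenberg--Sobolev inequality at first order: for $u \in C_c^\infty(\mathbb{R}^n)$ and $1 \le p < n$ one has $\|u\|_{L^{p^*}(\mathbb{R}^n)} \le C\|\nabla u\|_{L^p(\mathbb{R}^n)}$ with $p^* = np/(n-p)$. I would prove the case $p=1$ by writing $u(x)$ as the integral of $\partial_i u$ along each coordinate line, bounding $|u|^{n/(n-1)}$ by a product of $n$ such one-dimensional integrals, and applying the generalized H\"older inequality to integrate out the variables one at a time; the general case follows by applying the $p=1$ estimate to $|u|^{\gamma}$ for a suitable power $\gamma$ and using H\"older once more. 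One then iterates, $W^{k,p} \hookrightarrow W^{k-1,p_1} \hookrightarrow \cdots \hookrightarrow L^{t^*}$, where each step raises the integrability exponent by the same rule, and a short bookkeeping of the exponents produces exactly $t^* = np/(n-kp)$ under the hypothesis $kp < n$.

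The compact embedding into $L^q(\Omega)$ for $q < t^*$ is the Rellich--Kondrachov step. Given a bounded sequence in $W^{k,p}(\Omega)$, one embedding step of $(i)$ leaves it bounded in some first-order space; I extend it to $\mathbb{R}^n$ with $E$ and mollify. For each fixed mollification radius the mollified functions are uniformly bounded and equicontinuous, hence precompact in $C^0$ by Arzel\`a--Ascoli; as the radius tends to zero the mollification approximates the original functions uniformly in the sequence, via the $L^1$ bound on the gradient. Interpolating the $L^q$ norm between $L^1$ and the uniformly bounded $L^{t^*}$ norm then shows the sequence is totally bounded, hence precompact, in $L^q(\Omega)$ for every $q < t^*$.

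Part $(ii)$ rests on Morrey's inequality: if $p > n$ then $W^{1,p}(\mathbb{R}^n) \hookrightarrow C^{0,1-n/p}$, proved through the pointwise bound that controls the oscillation of $u$ on a ball by an averaged integral of $|\nabla u|$, estimated in turn by H\"older. To reach $C^{m,\alpha}$ when $m < k - n/p < m+1$, I combine the two mechanisms: apply the integrability embeddings of $(i)$ to the derivatives of order $\le m$ until the relevant Sobolev space has a Lebesgue exponent exceeding $n$, then apply Morrey to the top derivative, which yields H\"older continuity with exponent exactly $\alpha = k - n/p - m$; the compact inclusion into $C^{m,\beta}(\overline{\Omega})$ for $\beta < \alpha$ is again Arzel\`a--Ascoli, since a bounded set in $C^{m,\alpha}$ is equicontinuous in the $C^{m,\beta}$ norm whenever $\beta < \alpha$. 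I expect the genuinely delicate points to be the construction of the extension operator for a merely Lipschitz domain and the careful tracking of exponents through the iteration in $(i)$; the compactness arguments, though technical, are routine once the continuous embeddings and $E$ are in hand.
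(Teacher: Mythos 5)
The paper contains no proof of this statement: it is quoted verbatim from Gilbarg--Trudinger (Lemma 7.26 in \cite{G-T}), so your attempt must be judged against the standard proof in that reference rather than against anything in the paper itself. Measured that way, your outline is the classical argument and most of it is sound: the Gagliardo--Nirenberg--Sobolev inequality for $p=1$ via line integrals and the iterated H\"older loop, the exponent bookkeeping giving $t^{*}=np/(n-kp)$, the Rellich--Kondrachov compactness via mollification, Arzel\`a--Ascoli and interpolation, and Morrey plus Arzel\`a--Ascoli for part $(ii)$ are all correct in substance.

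There is, however, one genuine gap, precisely at the point you yourself flagged as ``delicate'': for $k\geq 2$ on a merely $C^{0,1}$ domain, the construction of $E:W^{k,p}(\Omega)\to W^{k,p}(\mathbb{R}^{n})$ by partition of unity, local flattening and reflection does not work. Flattening composes $u$ with a bi-Lipschitz map $\Phi$ whose derivative is only $L^{\infty}$; the chain rule for second derivatives involves $D^{2}\Phi$, which need not exist as a function, so $u\circ\Phi$ is in general only $W^{1,p}$, and the flattened problem no longer lives in $W^{k,p}$. Moreover, even on a half-space, even reflection preserves $W^{1,p}$ but produces a jump in the normal derivative across the hyperplane, so it fails already in $W^{2,p}$; the higher-order reflection (Babi\v{c}/Lions type) that repairs this requires a $C^{k-1,1}$ boundary after flattening --- exactly the regularity a $C^{0,1}$ domain lacks. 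Two standard repairs exist. Either invoke the Calder\'on or Stein extension theorems (Stein's construction handles Lipschitz domains for every $k$, via a regularized distance and an integral kernel --- a substantially deeper device than reflection), or avoid higher-order extension entirely, as in the proof in \cite{G-T}: establish the $k=1$ embeddings on $\Omega$ (where first-order reflection across a Lipschitz graph is legitimate) and then induct on $k$ by applying the first-order result to each derivative $D^{\beta}u$ with $|\beta|\leq k-1$, carrying out the iteration inside $\Omega$ rather than on $\mathbb{R}^{n}$. With either repair the remainder of your argument goes through; one small bookkeeping remark for part $(ii)$: the strict inequalities $m<k-\frac{n}{p}<m+1$ force $\frac{n}{p}\notin\mathbb{Z}$, which is exactly what guarantees that the iteration of part $(i)$ never lands on the borderline exponent $p_{j}=n$ before Morrey's inequality can be applied.
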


The following inequality is is proved in (\cite{G-T}, Lemma 9.17).
\begin{lemma}\label{elliptic theorem for p} Let $\Omega$ be a bounded $C^{1,1}$ domain in $\R^n$
and the operator $L =  a^{ij}(x)D_{ij}u + b^{i}(x)D_{i}u + c(x)u$ be strictly elliptic in $\Omega$ with coefficients $a^{ij}\in C(\Omega)$, $b^{i}, c\in L^{\infty}(\Omega)$, with $i, j = 1, ..., n$ and $c \leq 0$. Then there exists a positive constant $C$ (independent of u) such that  
\begin{equation*} 
\| u \|_{W^{2, p}(\Omega)}\leq  C\| Lu \|_{L^{p}(\Omega)}, 
\end{equation*} 
for all $u\in W^{2, p}(\Omega)\cap W_{0}^{1, p}(\Omega)$, $1 < p < \infty$.
\end{lemma}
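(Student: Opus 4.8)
The plan is to reduce the global estimate to the model case of constant leading coefficients and then propagate it to variable coefficients, to the boundary, and finally to remove the lower-order contributions. The cornerstone is the interior Calder\'on--Zygmund estimate for the Laplacian: if $w\in W^{2,p}(\R^n)$ has compact support, then $\|D^2 w\|_{L^p}\le C(n,p)\|\Delta w\|_{L^p}$, which one obtains by representing the second derivatives of $w$ through the Newtonian potential of $\Delta w$ and invoking the $L^p$-boundedness of the resulting singular integral operators. Since the leading coefficients $a^{ij}$ are continuous, for each $x_0\in\overline{\Omega}$ I would freeze the coefficients at $x_0$ and write $L = a^{ij}(x_0)D_{ij} + \big(a^{ij}(x)-a^{ij}(x_0)\big)D_{ij} + b^i D_i + c$. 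On a sufficiently small ball the oscillation term $\big(a^{ij}(x)-a^{ij}(x_0)\big)D_{ij}u$ has $L^p$-norm small compared with $\|D^2u\|_{L^p}$ (by uniform continuity of $a^{ij}$ on compacta), so after applying the frozen, constant-coefficient estimate it can be absorbed into the left-hand side.

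Next I would globalize. Cover $\overline{\Omega}$ by finitely many such small balls, with a subordinate partition of unity $\{\eta_k\}$; on the interior balls the frozen-coefficient estimate applies directly to $\eta_k u$, while on balls meeting $\pOm$ I would use the $C^{1,1}$ regularity to flatten the boundary by a $C^{1,1}$ diffeomorphism, reducing to the half-space Dirichlet problem, where the constant-coefficient estimate holds after an odd reflection that respects the vanishing trace $u\in W^{1,p}_0(\Omega)$. Summing over $k$, and controlling the lower-order terms $b^iD_iu$ and $cu$ by the interpolation inequality $\|Du\|_{L^p}\le \epsilon\|D^2u\|_{L^p}+C_\epsilon\|u\|_{L^p}$ and absorbing, yields the preliminary bound
\[
\|u\|_{W^{2,p}(\Omega)}\le C\big(\|Lu\|_{L^p(\Omega)}+\|u\|_{L^p(\Omega)}\big).
\]

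It remains to discard the term $\|u\|_{L^p(\Omega)}$, and this is precisely where the hypothesis $c\le 0$ is used. Because $c\le 0$, the operator $L$ satisfies a maximum principle, so the homogeneous problem $Lu=0$, $u|_{\pOm}=0$ has only the trivial solution in $W^{2,p}(\Omega)\cap W^{1,p}_0(\Omega)$. I would then argue by contradiction: if no constant $C$ made $\|u\|_{W^{2,p}}\le C\|Lu\|_{L^p}$ hold, there would be a sequence $u_m$ with $\|u_m\|_{W^{2,p}(\Omega)}=1$ and $\|Lu_m\|_{L^p(\Omega)}\to 0$. The preliminary bound forces $\|u_m\|_{L^p}$ to stay bounded away from $0$; by the compact embedding in Theorem \ref{imbedding theorem} a subsequence converges strongly in $L^p$ and weakly in $W^{2,p}$ to some $u$ with $\|u\|_{L^p}>0$, $u\in W^{2,p}(\Omega)\cap W^{1,p}_0(\Omega)$, and $Lu=0$ (the fixed multipliers $a^{ij},b^i,c\in L^\infty$ preserve weak $L^p$-convergence), contradicting uniqueness. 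Hence the constant exists. Alternatively, for $p\ge n$ the Aleksandrov--Bakelman--Pucci estimate bounds $\sup_\Omega|u|$, and therefore $\|u\|_{L^p}$, directly by $\|Lu\|_{L^n(\Omega)}\le C\|Lu\|_{L^p(\Omega)}$, and the remaining range $1<p<n$ is recovered by the same compactness reasoning.

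The main obstacle is twofold. The genuinely hard analytic input is the constant-coefficient singular-integral estimate of the first step; everything downstream is localization, boundary flattening, and perturbative absorption. The second delicate point is the removal of $\|u\|_{L^p}$: it is not a mere absorption but genuinely exploits the sign condition $c\le 0$ through the maximum principle, and the compactness argument must invoke both the compact Sobolev embedding and the uniqueness that the sign condition guarantees.
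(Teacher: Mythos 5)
The paper does not actually prove this lemma; it is imported verbatim as Lemma 9.17 of Gilbarg--Trudinger \cite{G-T}, so the only fair comparison is with the proof given there --- and your sketch reconstructs precisely that proof: the Calder\'on--Zygmund estimate for the constant-coefficient model operator, freezing of the continuous leading coefficients with absorption of the small oscillation term, a finite cover with partition of unity plus $C^{1,1}$ boundary flattening, interpolation to absorb $b^iD_iu$ and $cu$, yielding the a priori bound
\begin{equation*}
\| u \|_{W^{2,p}(\Omega)} \leq C\left( \| Lu \|_{L^{p}(\Omega)} + \| u \|_{L^{p}(\Omega)} \right)
\end{equation*}
(Theorem 9.13 in \cite{G-T}), followed by the compactness--uniqueness contradiction argument to delete $\|u\|_{L^p(\Omega)}$, which is literally how \cite{G-T} deduces Lemma 9.17. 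Two points in your write-up deserve care. First, the freezing argument on balls meeting $\partial\Omega$ needs uniform continuity of $a^{ij}$ up to the boundary, i.e.\ $a^{ij}\in C(\overline\Omega)$; the paper's statement says $C(\Omega)$, but that is a transcription slip from \cite{G-T}, and your phrase ``uniform continuity on compacta'' quietly uses the corrected hypothesis. Second, and more substantively, your uniqueness step is only immediate for $p\geq n$: the Aleksandrov maximum principle applies to strong solutions in $W^{2,n}_{loc}$, so for $1<p<n$ the claim that $Lu=0$ with $u\in W^{2,p}(\Omega)\cap W^{1,p}_{0}(\Omega)$ forces $u=0$ requires first raising the integrability of the homogeneous solution (this is Lemma 9.16 in \cite{G-T}) before the maximum principle can be invoked. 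Your closing remark that the range $1<p<n$ is ``recovered by the same compactness reasoning'' elides this: the compactness argument itself consumes uniqueness at the given exponent $p$, so it cannot supply it. With that bootstrap inserted, your proof is complete and coincides with the standard one.
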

Here is a direct consequence of Lemma \ref{elliptic theorem for p}.
\begin{corollary}\label{p-est} Let $\Omega$ be a bounded $C^{1,1}$ domain in $\R^n.$ Assume that $p\geq 2.$ Then there exist constants $\Lambda_1$ and  $\Lambda_2$ such that 
\begin{equation*} 
\Lambda_1 \| u \|_{W^{2, p}(\Omega)}\leq  \|\Delta u\|_{L^p(\Omega)} \leq \Lambda_2 \| u \|_{W^{2, p}(\Omega)}, 
\end{equation*} 
for all $u\in W^{2, p}(\Omega)\cap H_0^1(\Omega).$
\end{corollary}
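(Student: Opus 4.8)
The plan is to obtain both inequalities by specializing the a priori estimate of Lemma \ref{elliptic theorem for p} to the Laplacian. The right-hand inequality requires no regularity theory and I would dispose of it first. Since $\Delta u = \sum_{i=1}^{n} D_{ii}u$ is a sum of pure second-order derivatives of $u$, one has
\[
\|\Delta u\|_{L^p(\Omega)} \leq \sum_{i=1}^{n} \|D_{ii}u\|_{L^p(\Omega)} \leq n\,\|u\|_{W^{2,p}(\Omega)},
\]
so it suffices to take $\Lambda_2 = n$ (or any constant absorbing the particular norm chosen on $W^{2,p}(\Omega)$).

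For the left-hand inequality I would apply Lemma \ref{elliptic theorem for p} to the operator $L = \Delta$, that is, with coefficients $a^{ij} = \delta^{ij}$, $b^i = 0$ and $c \equiv 0$. These choices meet every hypothesis of the lemma: $a^{ij} = \delta^{ij} \in C(\Omega)$, $b^i, c \in L^{\infty}(\Omega)$ with $c \leq 0$, and $L = \Delta$ is strictly elliptic with ellipticity constant $1$; moreover $\Omega$ is assumed to be a bounded $C^{1,1}$ domain, exactly as required. The lemma then produces a constant $C > 0$, independent of $u$, with $\|u\|_{W^{2,p}(\Omega)} \leq C\,\|\Delta u\|_{L^p(\Omega)}$ for every $u \in W^{2,p}(\Omega)\cap W_0^{1,p}(\Omega)$, giving the claimed bound with $\Lambda_1 = 1/C$.

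The only point that genuinely needs care — and hence the main obstacle — is reconciling the function spaces: Lemma \ref{elliptic theorem for p} is stated for $u \in W^{2,p}(\Omega)\cap W_0^{1,p}(\Omega)$, whereas the corollary concerns $u \in W^{2,p}(\Omega)\cap H_0^1(\Omega)$. I would bridge this gap by a trace argument that uses the standing assumption $p \geq 2$. Indeed, $u \in W^{2,p}(\Omega) \subset W^{1,p}(\Omega)$, and on the $C^{1,1}$ (hence Lipschitz) domain $\Omega$ the subspace $W_0^{1,p}(\Omega)$ is precisely the kernel of the trace operator acting on $W^{1,p}(\Omega)$. Since $\Omega$ is bounded and $p \geq 2$, H\"older's inequality yields $W^{2,p}(\Omega) \hookrightarrow H^2(\Omega)$, so the $W^{1,p}$-trace of $u$ agrees with its $H^1$-trace; the latter vanishes because $u \in H_0^1(\Omega)$. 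Consequently $u \in W_0^{1,p}(\Omega)$, the hypotheses of the lemma are satisfied, and the estimate applies. Everything beyond this trace consistency is a direct substitution into the cited bound.
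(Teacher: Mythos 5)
Your proof is correct and follows essentially the same route as the paper: the left-hand inequality comes from Lemma \ref{elliptic theorem for p} applied to $L=\Delta$ (with $a^{ij}=\delta^{ij}$, $b^i=0$, $c=0$), the right-hand inequality from the definition of the $W^{2,p}(\Omega)$-norm, and the function spaces are reconciled via the identification $W^{2,p}(\Omega)\cap H_0^1(\Omega)=W^{2,p}(\Omega)\cap W_0^{1,p}(\Omega)$ for $p\geq 2$. The only difference is that the paper dismisses this identification as ``easily seen,'' whereas you supply the trace argument (traces in $W^{1,p}$ and $H^1$ coincide on the bounded $C^{1,1}$ domain, and $W_0^{1,p}(\Omega)$ is the kernel of the trace operator) that justifies it.
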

\textbf{Proof.} Since $p \geq 2,$ it is easily seen that $W^{2, p}(\Omega)\cap H_0^1(\Omega)=W^{2, p}(\Omega)\cap W_{0}^{1, p}(\Omega).$  Thus, the existence of $\Lambda_1$ follows from Lemma \ref{elliptic theorem for p}.  The existence of $\Lambda_2$ follows from the definition of the Sobolev space $W^{2, p}(\Omega).$ \hfill $\square$\\

\section{Proofs and further comments}

We shall need some preliminary results before proving Theorems \ref{main1} and Theorem \ref{main2} in this section.
We shall  consider the Banach space $V = H_0^1(\Omega) \cap L^p(\Omega)$  equipped with the following norm
\begin{align*}
\| u \| := \| u \|_{H_0^1(\Omega)} + \| u \|_{L^p(\Omega)}. 
\end{align*}
Let  $I : V\rightarrow \mathbb{R}$ be the Euler-Lagrange functional corresponding to (\ref{eq}),

\begin{equation*} 
I(u) = \frac{1}{2} \int_{\Omega} | \nabla u | ^{2} dx - \dfrac{1}{p} \int_{\Omega} | u | ^{p} dx - \dfrac{\mu}{q} \int_{\Omega} | u | ^{q} dx .
\end{equation*}
To make use of Theorem \ref{con22}, we shall first define  the function $\Phi : V\rightarrow \mathbb{R}$ by 
\begin{align*}
\Phi (u) = \dfrac{1}{p} \int_{\Omega} | u | ^{p} dx+ \dfrac{\mu}{q} \int_{\Omega} | u | ^{q} dx ,
\end{align*}
Note that $\Phi \in C^1(V;\R).$ 
 Define 
 $\Psi : V \rightarrow \R$ by 
\begin{equation*} 
\Psi(u) = \frac{1}{2} \int_{\Omega} | \nabla u | ^{2} dx.
\end{equation*}
The restriction of $\Psi$ to  a convex and weakly closed subset $K$ of $V$ is denoted by $\Psi_{K}$ and defined by 
\begin{eqnarray}
\Psi_K(u)=\left\{
  \begin{array}{ll}
      \Psi(u), & u \in K, \\
    +\infty, & u \not \in K,
  \end{array}
\right.
\end{eqnarray}
Finally, let us introduce  the functional $I_K: V \to (-\infty, +\infty]$ defined by
 \begin{eqnarray}\label{20}
 I_K(u):= \Psi_K(u)-\Phi(u),
 \end{eqnarray}
 which is of the form $(H).$ Note that 
 $I_K$ is indeed  the Euler-Lagrange functional corresponding to (\ref{eq}) restricted to  $K$. Here is a simplified version of Theorem \ref{con22} applicable to problem (\ref{eq}).

 \begin{theorem}\label{con2}
Let $V=H_0^1(\Omega) \cap L^p(\Omega)$, and let  and $K$ be a convex and weakly closed subset of $V$. If the following two assertions hold:
\begin{enumerate}
\item[$(i)$] The functional $I_K: V \rightarrow \mathbb{R} \cup \{+\infty\}$ defined in (\ref{20})  has a critical point $\bar u\in V$ as in Definition \ref{dd}, and; 

\item[ $(ii)$] there exists $\bar v\in K$ such that $-\Delta \bar v = D \Phi(\bar u)=\bar u |\bar  u|^{p-2}+\mu \bar u | \bar u| ^{q - 2}.$
\end{enumerate} 
Then $\bar u\in K$ is a solution of the equation 
 \begin{equation} \label{equ1} -\Delta  u = u |  u|^{p-2}+\mu u | u| ^{q - 2}. \end{equation}
 \end{theorem}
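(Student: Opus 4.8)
The plan is to derive Theorem \ref{con2} directly from the general variational principle Theorem \ref{con22}, by verifying that the specific choices $\Psi(u)=\frac12\int_\Omega|\nabla u|^2\,dx$ and $\Phi(u)=\frac1p\int_\Omega|u|^p\,dx+\frac{\mu}{q}\int_\Omega|u|^q\,dx$ satisfy the structural hypotheses of that theorem. First I would check that $\Psi$ is proper, convex, and lower semi-continuous on $V=H_0^1(\Omega)\cap L^p(\Omega)$: convexity follows since $u\mapsto|\nabla u|^2$ is convex and integration preserves convexity, properness is immediate as $\Psi(0)=0$, and weak lower semi-continuity follows from the fact that the $H_0^1$-seminorm is weakly lower semi-continuous. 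Next I would confirm $\Phi\in C^1(V,\mathbb R)$ (already asserted in the text) and compute its Gâteaux derivative as $\langle D\Phi(u),\varphi\rangle=\int_\Omega(|u|^{p-2}u+\mu|u|^{q-2}u)\varphi\,dx$, which identifies $D\Phi(u)=u|u|^{p-2}+\mu u|u|^{q-2}$ as an element of $V^*$.

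The core computation is to identify the Gâteaux derivative of $\Psi$ on the convex set $K$. For $u\in K$ at which $\Psi$ is differentiable, one has $\langle D\Psi(u),\varphi\rangle=\int_\Omega\nabla u\cdot\nabla\varphi\,dx$, which is precisely the weak form of $-\Delta u$; that is, $D\Psi(u)=-\Delta u$ interpreted in the distributional/dual sense. With this identification, hypothesis $(ii)$ of Theorem \ref{con22}, namely the existence of $v_0\in K$ with $D\Psi(v_0)=D\Phi(u_0)$, translates verbatim into the condition stated as $(ii)$ here: there exists $\bar v\in K$ with $-\Delta\bar v=\bar u|\bar u|^{p-2}+\mu\bar u|\bar u|^{q-2}$. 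Likewise, condition $(i)$ is simply the specialization of the abstract critical-point hypothesis to the concrete functional $I_K$ defined in (\ref{20}), using Definition \ref{dd}.

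Having matched both hypotheses, Theorem \ref{con22} yields that $\bar u\in K$ and that $D\Psi(\bar u)=D\Phi(\bar u)$. The final step is to unwind this abstract equation: $D\Psi(\bar u)=D\Phi(\bar u)$ means that for every $\varphi\in V$,
\begin{equation*}
\int_\Omega\nabla\bar u\cdot\nabla\varphi\,dx=\int_\Omega\bigl(\bar u|\bar u|^{p-2}+\mu\bar u|\bar u|^{q-2}\bigr)\varphi\,dx,
\end{equation*}
which is exactly the weak formulation of $-\Delta\bar u=\bar u|\bar u|^{p-2}+\mu\bar u|\bar u|^{q-2}$, i.e. equation (\ref{equ1}). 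Since $\bar u\in K\subset H_0^1(\Omega)$, the boundary condition $\bar u=0$ on $\partial\Omega$ is built into the function space, so $\bar u$ is a weak solution of (\ref{eq}).

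The only genuinely delicate point is the justification that on $K$ the Gâteaux derivative of $\Psi$ really coincides with $-\Delta$ paired against test functions from all of $V$, rather than merely against directions that keep one inside $K$; since $\Psi$ is convex and finite on the convex set $K$ and $V$ is reflexive, this is where one must invoke that the abstract machinery of Theorem \ref{con22} has already absorbed the subtlety of differentiating a restricted convex functional, so that the present proof is essentially a transcription. I expect this identification of $D\Psi$ with the Laplacian in the weak sense to be the main thing requiring care; everything else is a routine substitution of the concrete data into the abstract framework.
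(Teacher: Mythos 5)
Your proposal is logically sound, but it follows a genuinely different route from the paper. You prove Theorem \ref{con2} by verifying the hypotheses of the abstract principle, Theorem \ref{con22}, and invoking it as a black box; this is legitimate within the paper (Theorem \ref{con22} is stated with a citation to \cite{Mo5}), and your verifications are essentially right: $V$ is reflexive, $\Psi$ is convex, continuous and G\^ateaux differentiable with $\langle D\Psi(u),\varphi\rangle=\int_\Omega\nabla u\cdot\nabla\varphi\,dx$, $\Phi\in C^1(V,\mathbb{R})$, and conditions $(i)$, $(ii)$ translate verbatim. The paper, however, deliberately does \emph{not} cite Theorem \ref{con22}; its stated purpose for Theorem \ref{con2} is to give the reader a self-contained proof of the special case. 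That proof is short and elementary: test the critical-point inequality of Definition \ref{dd} at the single choice $v=\bar v$, integrate by parts using $-\Delta\bar v=D\Phi(\bar u)$ to get $\Psi(\bar v)-\Psi(\bar u)\geq\int_\Omega\nabla\bar v\cdot\nabla(\bar v-\bar u)\,dx$, combine with the reverse inequality coming from convexity of $\Psi$ at $\bar v$, and conclude $\int_\Omega|\nabla\bar v-\nabla\bar u|^2\,dx=0$, hence $\bar u=\bar v$. What the paper's route buys is precisely this identity $\bar u=\bar v$: it shows directly that $\bar u$ lies in $K$, inherits the $W^{2,n}$ regularity carried by $\bar v$, and satisfies the equation pointwise rather than merely weakly --- facts exploited later (e.g., the assertion $\overline{u}\in W^{2,n}(\Omega)$ in Theorem \ref{main1}); your route recovers this too if you observe that $D\Psi$ is injective on $H_0^1(\Omega)$, so $D\Psi(\bar u)=D\Psi(\bar v)$ forces $\bar u=\bar v$, but you did not note it. Finally, the ``delicate point'' you flag at the end is actually moot: Theorem \ref{con22} requires G\^ateaux differentiability of the \emph{unrestricted} functional $\Psi$ on $K$ (not of $\Psi_K$), and here $\Psi$ is a globally smooth quadratic form, so no subtlety about differentiating along directions leaving $K$ arises.
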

\textbf{Proof.} Since $\bar u$ is a critical point of $I(u)=\Psi_K(u)-\Phi(u),$
it follows from Definition \ref{dd} that 
\begin{equation}\label{ineq0}
\Psi_K(v)-\Psi_K(\bar u)\geq \langle D \Phi(\bar u),v-\bar u\rangle,\quad \forall v\in V,
\end{equation}
where $\langle D \Phi(\bar u),v-\bar u\rangle=\int_\Omega D \Phi(\bar u)(v-\bar u)\, dx.$
It follows from $(ii)$ in the theorem that  $-\Delta \bar v=D\Phi(\bar u)$. Thus, it follows from  inequality  (\ref{ineq0}) with  $v=\bar v$ that 
\begin{equation}\label{ineq2}
\frac{1}{2} \int_{\Omega} | \nabla \bar v | ^{2} dx-\frac{1}{2} \int_{\Omega} | \nabla \bar u | ^{2} dx\geq \int_\Omega \nabla \bar v.\nabla (\bar v-\bar u)\, dx.
\end{equation}
On the other hand, it follows from the convexity of $\Psi$ that 
\begin{equation}\label{ineq22}
\frac{1}{2} \int_{\Omega} | \nabla \bar u | ^{2} dx-\frac{1}{2} \int_{\Omega} | \nabla \bar v | ^{2} dx\geq \int_\Omega \nabla \bar v.\nabla (\bar u-\bar v)\, dx.
\end{equation}
Thus, by (\ref{ineq2}) and (\ref{ineq22}) we obtain that  
\begin{equation}\label{ineq1}
\frac{1}{2} \int_{\Omega} | \nabla \bar v | ^{2} dx-\frac{1}{2} \int_{\Omega} | \nabla \bar u | ^{2} dx= \int_\Omega \nabla \bar v.\nabla (\bar v-\bar u)\, dx.
\end{equation}
This indeed implies that 
\[\int_{\Omega} | \nabla \bar v-\nabla \bar u | ^{2} dx=0,\]
from which we obtain  $\bar v=\bar u.$ This completes the proof. \hfill $\square$\\

We shall use Theorem \ref{con2} to prove our main results in Theorems \ref{main1} and \ref{main2}. The convex closed subset $K$ of $V$ required in Theorem \ref{main2} is defined as follows  
 \[K(r) :=\big \{ u \in V : \| u \| _{W^{2, n}(\Omega)} \leq r \big  \}, \] for some $r > 0$ to be determined later. Also, the convex set $K$ required in the proof of Theorem \ref{main1} consists of all  non-negative functions in $K(r)$ for some $r>0.$ \\

To apply theorem \ref{con2}, we shall need to verify both conditions $(i)$ and $(ii)$ in this Theorem.  To verify condition  $(i)$  in Theorem \ref{main1} we simply find a minimizer of $I_K$ for  
some weakly compact and convex subset   $K $ of $V,$ and in Theorem \ref{main2} we shall make use of the abstract Theorem \ref{critical theorem} to find a sequence of solutions. However, condition $(ii)$ in Theorem \ref{con2} seems to be rather identical for both Theorems \ref{main1} and \ref{main2}.  Let us first proceed with condition $(ii)$ in Theorem \ref{con2}.  In fact, our plan is to show that if $\overline{u}\in K(r)$ then  for appropriate choices of $r,$ there exists $v \in K(r) $ such that $D \Psi(v) = D \Phi(\overline{u})$.
 We shall do this in a few lemmas. 
\begin{lemma} \label{3.2}
Assume that $1 < q < 2 < p$. Let $d_1$ and $d_2$ be the the best constants in the imbeddings  $W^{2,n}(\Omega)\hookrightarrow  L^{n(p-1)} (\Omega)$ and $W^{2,n}(\Omega)\hookrightarrow  L^{n(q-1)} (\Omega)$, respectively. Then
\begin{align*}
\| D\Phi (u) \|_{L^{n}(\Omega)} \leq   C_{1} r^{p -1} + \mu C_{2} r^{q -1}, \qquad \forall u \in K(r),
\end{align*} 
where   $C_1=d_1^{p-1}$ and $C_2=d_2^{q-1}.$ 
\end{lemma}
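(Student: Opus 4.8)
The plan is to bound $\|D\Phi(u)\|_{L^n(\Omega)}$ directly by computing $D\Phi(u)$ explicitly and then applying the $L^p$ triangle inequality followed by the two stated Sobolev embeddings. First I would record that for $u \in V$ the G\^ateaux derivative of $\Phi$ is
\begin{equation*}
D\Phi(u) = u|u|^{p-2} + \mu\, u|u|^{q-2},
\end{equation*}
so that pointwise $|D\Phi(u)| \leq |u|^{p-1} + \mu |u|^{q-1}$. Applying the triangle inequality in $L^n(\Omega)$ then gives
\begin{equation*}
\|D\Phi(u)\|_{L^n(\Omega)} \leq \big\||u|^{p-1}\big\|_{L^n(\Omega)} + \mu \big\||u|^{q-1}\big\|_{L^n(\Omega)}.
\end{equation*}

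The next step is to rewrite each term on the right as a power of a higher Lebesgue norm of $u$ itself. Since $\big\||u|^{p-1}\big\|_{L^n(\Omega)} = \big(\int_\Omega |u|^{n(p-1)}\big)^{1/n} = \|u\|_{L^{n(p-1)}(\Omega)}^{\,p-1}$, and similarly $\big\||u|^{q-1}\big\|_{L^n(\Omega)} = \|u\|_{L^{n(q-1)}(\Omega)}^{\,q-1}$, the bound becomes
\begin{equation*}
\|D\Phi(u)\|_{L^n(\Omega)} \leq \|u\|_{L^{n(p-1)}(\Omega)}^{\,p-1} + \mu\, \|u\|_{L^{n(q-1)}(\Omega)}^{\,q-1}.
\end{equation*}
Now I would invoke the two embeddings from the hypothesis: $\|u\|_{L^{n(p-1)}(\Omega)} \leq d_1 \|u\|_{W^{2,n}(\Omega)}$ and $\|u\|_{L^{n(q-1)}(\Omega)} \leq d_2 \|u\|_{W^{2,n}(\Omega)}$, where $d_1, d_2$ are the stated best constants. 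Raising these to the powers $p-1$ and $q-1$ respectively and using $\|u\|_{W^{2,n}(\Omega)} \leq r$ for $u \in K(r)$ yields
\begin{equation*}
\|D\Phi(u)\|_{L^n(\Omega)} \leq d_1^{\,p-1} r^{p-1} + \mu\, d_2^{\,q-1} r^{q-1},
\end{equation*}
which is exactly the claim with $C_1 = d_1^{p-1}$ and $C_2 = d_2^{q-1}$.

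The only point requiring any care, rather than being purely routine, is the justification that the embeddings $W^{2,n}(\Omega)\hookrightarrow L^{n(p-1)}(\Omega)$ and $W^{2,n}(\Omega)\hookrightarrow L^{n(q-1)}(\Omega)$ actually hold as continuous embeddings, since these underpin the very existence of the constants $d_1, d_2$. This is where Theorem \ref{imbedding theorem} enters: with $k=2$ and $p=n$ one has $kp = 2n \geq n$, so the borderline case applies and $W^{2,n}(\Omega)$ embeds continuously into $L^t(\Omega)$ for every finite $t$; in particular for $t = n(p-1)$ and $t = n(q-1)$, both of which are finite for fixed $p$ and $q$. Thus the embeddings are legitimate and the constants are well-defined, and the remainder of the argument is the elementary chain of inequalities above. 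I expect no genuine obstacle here; the lemma is essentially a bookkeeping computation whose role is to set up the fixed-point/invariance argument that verifies condition $(ii)$ of Theorem \ref{con2}.
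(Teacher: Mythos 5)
Your argument is correct and is essentially the paper's own proof: both apply the triangle inequality in $L^{n}(\Omega)$, rewrite $\bigl\||u|^{p-1}\bigr\|_{L^{n}(\Omega)}=\|u\|_{L^{n(p-1)}(\Omega)}^{p-1}$ (likewise with $q-1$), invoke the embeddings $W^{2,n}(\Omega)\hookrightarrow L^{n(p-1)}(\Omega)$ and $W^{2,n}(\Omega)\hookrightarrow L^{n(q-1)}(\Omega)$ with best constants $d_1,d_2$, and finish with $\|u\|_{W^{2,n}(\Omega)}\leq r$ for $u\in K(r)$. One small imprecision in your added justification: the correct reason $W^{2,n}(\Omega)$ embeds into every finite $L^{t}(\Omega)$ is by chaining through $W^{2,n}(\Omega)\hookrightarrow W^{1,t}(\Omega)$ via the genuine borderline case $kp=n$ (i.e., $k=1$, $p=n$) applied to first derivatives, not because $kp=2n\geq n$ itself is borderline --- but the conclusion you use is true and is exactly what the paper cites from Theorem \ref{imbedding theorem}.
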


\noindent
\textbf{Proof.}
By definition of $D\Phi (u)$ we have
\begin{align*}% \label{6m} 
\| D\Phi (u) \|_{L^{n}(\Omega)} &= \big \| u | u| ^{p - 2} +\mu u | u | ^{q - 2} \big \|_{L^{n}(\Omega)} \leq \big \| u | u| ^{p - 2}\big \|_{L^{n}(\Omega)} + \mu \big \| u | u | ^{q - 2} \big \|_{L^{n}(\Omega)} \\
&\leq \| u \|_{L^{n(p -1)}(\Omega)}^{(p -1)} +  \mu \| u \|_{L^{n(q -1)}(\Omega)}^{(q -1)}.
\end{align*} 
By Theorem \ref{imbedding theorem} the space $W^{2, n}(\Omega)$ is compactly imbedded in $L^{n(p -1)}$ and $L^{n(q -1)}$. Thus,
\begin{align*}% \label{6m} 
\| D\Phi  (u) \|_{L^{n}(\Omega)}  \leq   C_{1} \| u \|_{W^{2, n}(\Omega)}^{p -1} + \mu C_{2} \| u \|_{W^{2, n}(\Omega)}^{q -1}.
\end{align*} 
It follows from $u \in K(r)$ that
\begin{align*}% \label{6m} 
\| D\Phi  (u) \|_{L^{n}(\Omega)}  \leq C_{1} r^{p -1} + \mu C_{2} r^{q -1} ,
\end{align*} 
as desired.
\hfill$\square$\\

By a straightforward  computation one can easily deduce the following result.
\begin{lemma} \label{300}
Let $1 < q < 2 < p$.  Assume that $C_1$ and $C_2$ are given in Lemma \ref{3.2}. Then there exists  $\mu ^{*} > 0$ with the following properties.
\begin{enumerate}
\item For each $\mu \in (0,\mu ^{*})$, there exist positive numbers $r_{1},  r_{2} \in \mathbb{R}$ with $r_{1} < r_{2}$ such that  $r \in  [r_{1}, r_{2}]$ if and only if   $C_{1} r^{p -1} + \mu C_{2} r^{q -1}\leq r.$
\item  For $\mu=\mu^*,$ there exists one and only one  $r>0$ such that $C_{1} r^{p -1} + \mu C_{2} r^{q -1}= r.$
\item For $\mu> \mu^*,$ there is no $r>0$ such that $C_{1} r^{p -1} + \mu C_{2} r^{q -1}= r.$
\end{enumerate}
\end{lemma}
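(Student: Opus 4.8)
The plan is to normalize the inequality. For $r>0$ the condition $C_1 r^{p-1}+\mu C_2 r^{q-1}\le r$ is equivalent, after dividing by $r$, to $h_\mu(r)\le 1$, where
\[
h_\mu(r):=C_1 r^{p-2}+\mu C_2 r^{q-2},\qquad r>0,
\]
and the equation $C_1 r^{p-1}+\mu C_2 r^{q-1}=r$ becomes $h_\mu(r)=1$. Thus all three assertions reduce to describing the sublevel set $\{r>0:h_\mu(r)\le 1\}$ and the solution set of $h_\mu(r)=1$ as $\mu$ varies.

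Next I would record the shape of $h_\mu$. Since $p-2>0>q-2$, the first term vanishes while the second blows up as $r\to 0^+$, and the roles reverse as $r\to\infty$; hence $h_\mu(r)\to+\infty$ at both endpoints. Differentiating,
\[
h_\mu'(r)=r^{q-3}\big(C_1(p-2)r^{p-q}-\mu C_2(2-q)\big),
\]
and since $r^{q-3}>0$ the sign of $h_\mu'$ is governed by the bracket, which is strictly increasing in $r$ and changes sign exactly once, at
\[
r_\mu:=\Big(\tfrac{\mu C_2(2-q)}{C_1(p-2)}\Big)^{1/(p-q)}.
\]
Therefore $h_\mu$ is strictly decreasing on $(0,r_\mu)$ and strictly increasing on $(r_\mu,\infty)$, so it attains a unique global minimum at $r_\mu$. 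This unimodality is the structural fact on which the whole classification rests.

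Then I would compute the minimum value $m(\mu):=h_\mu(r_\mu)$. Using the defining relation $\mu C_2 r_\mu^{q-2}=\tfrac{C_1(p-2)}{2-q}r_\mu^{p-2}$ one gets $m(\mu)=C_1\tfrac{p-q}{2-q}r_\mu^{p-2}$, and substituting $r_\mu$ yields $m(\mu)=\kappa\,\mu^{(p-2)/(p-q)}$ for an explicit positive constant $\kappa$ depending only on $C_1,C_2,p,q$. Since the exponent $(p-2)/(p-q)$ is positive, $m$ is continuous and strictly increasing on $(0,\infty)$ with $m(0^+)=0$ and $m(\mu)\to\infty$; hence there is a unique $\mu^*>0$ with $m(\mu^*)=1$.

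Finally I would read off the three cases from the graph of the unimodal $h_\mu$. If $\mu<\mu^*$ then $m(\mu)<1$, so the continuous unimodal function crosses the level $1$ exactly twice and $\{h_\mu\le1\}=[r_1,r_2]$ with $0<r_1<r_2$ (the interval is nondegenerate precisely because the minimum is strictly below $1$); this gives assertion (1). If $\mu=\mu^*$ then $m(\mu^*)=1$ is attained only at $r_\mu$, giving the unique root of assertion (2). If $\mu>\mu^*$ then $h_\mu>1$ everywhere, so the equation has no root, which is assertion (3). I expect the only genuinely delicate point to be the monotonicity bookkeeping, namely establishing that $h_\mu$ has a single minimum and that $m(\mu)$ is strictly increasing, since once these are in hand the three cases follow immediately; the remaining substitutions are the \emph{straightforward} computations alluded to in the statement.
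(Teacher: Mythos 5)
Your proof is correct and complete: the normalization to $h_\mu(r)=C_1r^{p-2}+\mu C_2r^{q-2}$, the unimodality via the sign of $h_\mu'$, the computation $m(\mu)=\kappa\,\mu^{(p-2)/(p-q)}$ with positive exponent, and the resulting three-case classification all check out. The paper offers no proof at all for this lemma (it is dismissed as a ``straightforward computation''), so your argument simply supplies, correctly, exactly the computation the authors had in mind.
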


\begin{remark}\label{pe}
Since the Sobolev space $W^{2, n}(\Omega)$ is compactly embedded into $L^{p}(\Omega)$, we obtain that \[V \cap W^{2, n}(\Omega)=H_0^1(\Omega) \cap W^{2, n}(\Omega).\] It also follows from  Corollary \ref{p-est} that $u \to \|\Delta u\|_{L^n(\Omega)}$ is an equivalent norm on  $H_0^1(\Omega) \cap W^{2, n}(\Omega)$.  For the rest of the paper, we shall then  consider this norm, i.e.,  for each  $u \in H_0^1(\Omega) \cap W^{2, n}(\Omega),$
\[\|u\|_{W^{2,n}(\Omega)}=\|\Delta u\|_{L^n(\Omega)}.\]
\end{remark}

We are now in the position to state the following result addressing condition $(ii)$ in Theorem \ref{con2}. 
\begin{lemma} \label{3.3}
Let  $1 < q < 2 < p$. Assume that $\mu ^{*} > 0$ is given in Lemma \ref{300} and  $\mu \in (0, \mu^*).$ Let $r_1, r_2$ be given in part 1) of  Lemma \ref{300}. Then   for each $r \in  [r_{1}, r_{2}]$  and   each $\overline{u}\in K(r)$ there exists $v \in K(r)$ such that 
 \[-\Delta v=\overline{u} | \overline{u}| ^{p - 2} +\mu \overline{u} | \overline{u} | ^{q - 2} .\]
\end{lemma}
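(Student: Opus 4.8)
The plan is to solve the linear Dirichlet problem $-\Delta v = f$ with $f := D\Phi(\overline u) = \overline u|\overline u|^{p-2} + \mu\,\overline u|\overline u|^{q-2}$, and then verify that the resulting $v$ lands back inside the ball $K(r)$. First I would note that by Lemma \ref{3.2} we have $f \in L^n(\Omega)$ with $\|f\|_{L^n(\Omega)} \leq C_1 r^{p-1} + \mu C_2 r^{q-1}$ whenever $\overline u \in K(r)$. Since $f \in L^n(\Omega) \subset L^2(\Omega)$ and $\Omega$ has $C^2$ (hence $C^{1,1}$) boundary, the standard theory of the Laplacian gives a unique weak solution $v \in H_0^1(\Omega)$ of $-\Delta v = f$, and elliptic $W^{2,p}$-regularity (Lemma \ref{elliptic theorem for p} applied with $L = \Delta$, i.e.\ $a^{ij} = \delta^{ij}$, $b^i = 0$, $c = 0$) promotes this to $v \in W^{2,n}(\Omega) \cap H_0^1(\Omega)$. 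In particular $v \in V \cap W^{2,n}(\Omega)$, so $v$ is an admissible candidate.

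The crux is the quantitative bound showing $v \in K(r)$, i.e.\ $\|v\|_{W^{2,n}(\Omega)} \leq r$. Here I would invoke Remark \ref{pe}, which lets me use the equivalent norm $\|v\|_{W^{2,n}(\Omega)} = \|\Delta v\|_{L^n(\Omega)}$ on $H_0^1(\Omega) \cap W^{2,n}(\Omega)$. Since $-\Delta v = f$, this norm is \emph{exactly} $\|f\|_{L^n(\Omega)}$, so combining with Lemma \ref{3.2} I get
\begin{align*}
\|v\|_{W^{2,n}(\Omega)} = \|\Delta v\|_{L^n(\Omega)} = \|f\|_{L^n(\Omega)} \leq C_1 r^{p-1} + \mu C_2 r^{q-1}.
\end{align*}
The entire point of Lemma \ref{300} is now deployed: for $\mu \in (0,\mu^*)$ and $r \in [r_1, r_2]$, part 1) of that lemma gives precisely $C_1 r^{p-1} + \mu C_2 r^{q-1} \leq r$. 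Chaining the two inequalities yields $\|v\|_{W^{2,n}(\Omega)} \leq r$, which is the statement $v \in K(r)$.

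I would expect the main obstacle to be bookkeeping rather than genuine difficulty: the chief subtlety is making sure $v$ really lies in the space on which the equivalent norm of Remark \ref{pe} is valid (one needs $v \in H_0^1(\Omega) \cap W^{2,n}(\Omega)$, which is why the regularity step must deliver $W^{2,n}$ membership and not merely $H^2$), and that $K(r)$ is measured in that same norm. The choice of exponent $n$ for the target $L^p$-space is exactly what makes the regularity estimate produce $W^{2,n}$ and close the loop against the definition of $K(r)$; using any other exponent would break the self-consistency. Once the norm identification $\|\Delta v\|_{L^n} = \|v\|_{W^{2,n}}$ and the inequality from Lemma \ref{300} are in place, the conclusion is immediate, so no delicate estimate beyond those already recorded is required.
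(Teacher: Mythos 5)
Your proposal is correct and follows essentially the same route as the paper's proof: solve the linear Dirichlet problem, upgrade to $W^{2,n}(\Omega)\cap H_0^1(\Omega)$ by elliptic regularity, identify $\|v\|_{W^{2,n}(\Omega)}=\|\Delta v\|_{L^n(\Omega)}$ via Remark \ref{pe}, and chain the bound of Lemma \ref{3.2} with the inequality $C_1 r^{p-1}+\mu C_2 r^{q-1}\leq r$ from Lemma \ref{300}. Your added remarks on why the exponent $n$ and the equivalent norm are what make the argument close are accurate but do not change the substance.
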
 

\noindent
\textbf{Proof.}
By standard methods we see that there exists   $v \in H_0^1(\Omega)$ which satisfies 
\begin{align} \label{4m}
-\Delta v =D\Phi (\overline{u}) = \overline{u} | \overline{u}| ^{p - 2} +\mu \overline{u} | \overline{u} | ^{q - 2} .
\end{align} 
in a weak sense. Since the right hand side is an element in $L^n(\Omega),$ it follows from the standard regularity results that $v \in W^{2,n}(\Omega) \cap H_0^1(\Omega)$ and $(\ref{4m})$ holds pointwise. Therefore,
\[\|\Delta v\|_{L^n(\Omega)} =\|D\Phi (\overline{u})\|_{L^n(\Omega)}. \]
Thus, by Remark \ref{pe} we have that 
\[\| v\|_{W^{2,n}(\Omega)} =\|D\Phi (\overline{u})\|_{L^n(\Omega)}. \]
This together with  Lemma \ref{3.2} yield that 
\begin{align*}% \label{6m} 
\| v \|_{W^{2, n}(\Omega)}  \leq C_{1} r^{p -1} + \mu C_{2} r^{q -1},
\end{align*}
By Lemma \ref{300},  for each $r \in  [r_{1}, r_{2}]$ we have that  $C_{1} r^{p -1} + \mu C_{2} r^{q -1}\leq r.$ Therefore,
\[\| v \|_{W^{2, n}(\Omega)}  \leq C_{1} r^{p -1} + \mu C_{2} r^{q -1}\leq r,\]
as desired. \hfill$\square$\\

\noindent
\textbf{Proof of Theorem \ref{main1}.}
Let $\mu^*$ be as in Lemma \ref{3.3} and $\mu \in (0, \mu^*).$ Also, let  $r_1$ and $r_2$ be as in Lemma \ref{3.3} and define 
\[K:=\big \{u \in K(r_2); \, \, \, u(x) \geq 0 \text{ a.e. } x \in \Omega\big \}.\]
{\it Step 1.}  We show that there exists  $\overline{u} \in K$ such that $I_K(\overline{u}) = \inf _{u \in V}I_K(u)$. Then by Proposition \ref{minimum}, we conclude that  $\overline{u}$  is a critical point of $I_K.$\\
Set $\eta := \inf _{u \in V}I_K(u)$. So by definition of $\Psi_K$ for every $u \notin K$, we have $I_K(u) = +\infty$ and therefore $\eta = \inf _{u \in K}I_K(u)$. On the other hand, by Theorem \ref{imbedding theorem}, the Sobolev space $W^{2, n}(\Omega)$ is compactly embedded in $L^{t}(\Omega)$ for all $t$, it then follows that for every $u \in K$ 
\begin{align*}
\Phi (u) & = \dfrac{1}{p} \int_{\Omega} | u | ^{p} dx+ \dfrac{\mu}{q} \int_{\Omega} | u | ^{q} dx \\
 &\leq c_{1} \| u\|_{W^{2, n}(\Omega)}^{p} + c_{2} \| u\|_{W^{2, n}(\Omega)}^{q} \leq  c_{1} r^{p} + c_{2} r^{q},
\end{align*}
for some positive constants $c_{1}$ and  $c_{2}$. Since $\Psi (u)$ is nonnegative, we have
 \begin{eqnarray*}
  I_K(u):= \Psi_K(u)-\Phi(u) \geq -\left(c_{1} r^{p} + c_{2} r^{q}\right),\qquad \forall u \in K.
 \end{eqnarray*}
So $\mu > -\infty$. 
Now, suppose that $\{u_{n}\}$ is a sequence in $V$ such that $I_K(u_{n})\rightarrow \eta$. So the sequence  $\{I_K(u_{n})\}$ is bounded and we can conclude by definition of $I_K$ that the sequence $\{u_{n}\}$ is bounded in $W^{2, n}(\Omega)$. Using standard results in Sobolev spaces, after passing to a subsequence if necessary, there exists $\overline{u} \in K$ such that $u_n \rightharpoonup \overline{u} $ weakly in $W^{2, n}(\Omega)$ and strongly in $V.$ Therefore, $I_K(u_{n})\rightarrow I_K(\overline{u})$. So, $I_K(\overline{u}) = \eta=\inf _{u \in V}I_K(u),$ and the proof of {\it Step 1} is complete.\\

{\it Step 2.} In this step we show that there exists $v\in K$ such that  $-\Delta v=\overline{u} | \overline{u}| ^{p - 2} +\mu \overline{u} | \overline{u} | ^{q - 2} .$ By  Lemma \ref{3.3} together with the fact that $\overline{u} \in K(r_2)$ we obtain that $v\in K(r_2).$ To show that $v\in K,$ we shall need to verify that $v$ is non-negative almost every where.  But, this is a simple consequence 
of the maximum principle and the fact  that $-\Delta v=\overline{u} | \overline{u}| ^{p - 2} +\mu \overline{u} | \overline{u} | ^{q - 2} \geq 0.$ \\

It now follows from Theorem \ref{con2} together with 
{\it Step 1} and 
{\it Step 2} that $\overline{u}$ is a solution of the problem (\ref{eq}).  To complete the proof we shall show that $\overline{u}$ is non-trivial by proving that 
 $I_{K}(\overline{u}) = \inf_{u \in V} I_{K}(u) < 0$. 
\\
Take $e \in K$. For $t\in [0,1],$ we have that  $te \in K$ and therefore
\begin{align*}
I_{K}(te) & = \frac{1}{2} \int_{\Omega} | \nabla te | ^{2} dx - \frac{1}{p} \int_{\Omega} | te | ^{p} dx - \frac{\mu}{q} \int_{\Omega} | te | ^{q} dx
\\ 
& = t^{q}\left( \frac{ t^{2 - q}}{2} \int_{\Omega} | \nabla e | ^{2} dx -\frac{ t^{p - q}}{p} \int_{\Omega} | e | ^{p} dx - \frac{\mu}{q} \int_{\Omega} | e | ^{q} dx\right).
\end{align*}
%So there exist $\alpha_{1}$, $\alpha_{2}$ and $\alpha_{3}$ such that
%\begin{align*}
%I_{K}(te) \leq \alpha_{1}t^{q} \left( \alpha_{2} t^{2 -q} - \alpha_{3} t^{p -q} - \mu \right).
%\end{align*}
Since $1 < q < 2 < p$, $I_{K}(te)$ is negative for $t$ sufficiently small. Thus,  we can conclude that $I_{K}(\overline{u}) = \inf_{u \in V} I_{K}(u) < 0$. Thus, $\overline{u}$ is a non-trivial and non-negative solution of (\ref{eq}). Finally,  it follows from the strong maximum principle that $\overline u>0$ on $\Omega$. 
\hfill$\square$\\

\textbf{Proof of Theorem \ref{main2}.}
Let $\mu^*$ be as in Lemma \ref{3.3} and $\mu \in (0, \mu^*).$ Also, let  $r_1$ and $r_2$ be as in Lemma \ref{3.3} and define $K=K(r_2).$   We first show  that the functional $I_K$ has infinitely many distinct critical points.  To do this,  we shall employ Theorem \ref{critical theorem}.  It is obvious that the function
$\Phi$ is even and continuously differentiable. Also $\Psi_K$ is a proper, convex and lower semi-continuous even function. So ${(H)}$ is satisfied. We now  verify ${(PS)}$. If $I_K(u_{n})\rightarrow c$ for some $c\in \mathbb{R}$, by definition of $I_K$ we can conclude that $\{u_{n}\}$ is bounded in $W^{2, n}(\Omega)$. Going if necessary to a subsequence, there exists some $\overline{u} \in W^{2,n}(\Omega) \cap H_0^1(\Omega)$ such that $u_{n}\rightharpoonup \overline{u}$ weakly in $W^{2, n}(\Omega).$  Due to the compact imbeddings of $W^{2, n}(\Omega)\hookrightarrow H_0^1(\Omega)$ and $W^{2, n}(\Omega)\hookrightarrow L^{p}(\Omega)$ we obtain that  $u_{n}\rightarrow \overline{u}$ in $V$ strongly. 

For each  $k\in \mathbb{N},$ considering the definition of $\Gamma_k$ in (\ref{Gam}), we define
\begin{align*}
c_{k} = \inf_{A\in \Gamma_{k}} \sup_{u \in A} I(u).
\end{align*}
We shall now prove  that $-\infty < c_{k} < 0$ for all $k \in \mathbb{N}.$
To this, let us denote by $\lambda_{j}$ the j-th eigenvalue of $-\Delta$ on $H_0^1(\Omega)$ (counted according to its multiplicity) and by $e_{j}$ a corresponding eigenfunction satisfying $\int_{\Omega} \nabla e_{i}.\nabla  e_{j}\, dx = \delta_{ij}$.  As in the proof of Theorem \ref{main1}, we have that  $I_K$ is bounded below. Thus $ c_{k} > -\infty$ for each $k \in \mathbb{N}.$ Let 
\begin{align*}
A:= \big \{u = \alpha_{1}e_{1} + ... + \alpha_{k}e_{k}  : \| u \|_{H_0^1(\Omega)}^{2} = \alpha_{1}^{2} + ... + \alpha_{k}^{2} = \rho^{2}\big \}, 
\end{align*}
for small $\rho > 0$ to be determined. Then $A \in \Gamma_{k}$ because $\gamma(A) = k$ by Proposition \ref{genus}. Since $A$ is finite dimensional, all norms are equivalent on $A$. Thus,  we can choose  $\rho$ small enough  so  that $A \subseteq K$. Also there exist positive  constants $c_{1}$, $c_{2}$ such that $\| u \|_{L^{p}(\Omega)} > c_{1} \| u \|_{H_0^1(\Omega)}$ and $\| u \|_{L^{q}(\Omega)} > c_{2} \| u \|_{H_0^1(\Omega)}$ for all $u \in A$. Therefore,
\begin{align*}
 I_K(u) &= \frac{1}{2}\| u \|_{H_{0}^{1}(\Omega)}^{2} - \dfrac{1}{p}\| u \|_{L^{p}(\Omega)}^{p} - \dfrac{\mu}{q}\| u \|_{L^{q}(\Omega)}^{q} \\
 & \leq  \dfrac{1}{2} \rho^{2} - \dfrac{1}{p}c_{1}^{p} \rho^{p} - \dfrac{\mu}{q} c_{2}^{q} \rho^{q} = \rho^{q} (\dfrac{1}{2}\rho^{2 - q} - \dfrac{1}{p}c_{1}^{p} \rho^{p - q} - \dfrac{\mu}{q} c_{2}^{q}).
\end{align*}
Now we can choose $\rho$ small enough such that $ I_K(u) \leq \rho^{q} (\dfrac{1}{2}\rho^{2 - q} - \dfrac{1}{p}c_{1}^{p} \rho^{p - q} - \dfrac{\mu}{q} c_{2}^{q}) < 0$ for every  $u \in A$. It then follows that $c_{k} < 0$. Thus, by Theorem \ref{critical theorem}, $I_K$ has has a sequence of distict critical points $\{u_k\}_{k \in \N}$ by means of Definition \ref{dd}. Also, by Lemma \ref{3.3}, for each critical point $u_k$ of $I_K$ there exists $v_k \in K$ such that $-\Delta v_k=D \Phi(u_k).$ It now follows from Theorem \ref{con2} that $\{u_k\}$ is a sequence of distinct solutions of (\ref{eq}) such that $I_K(u_k)<0$ for each $k \in \N.$ This completes the proof.
\hfill$\square$\\

It is evident that   Theorem {\ref{main2}} can be easily extended to $p-$laplacian problems similar to the problem  (\ref{eq}). Indeed, consider
\begin{equation}\label{conpp}
\left \{
\begin{array}{ll}
-\Delta_p u =|u|^{r-2} u+\mu |u|^{q-2}u, &   x \in \Omega\\
u=0, &  x \in \partial \Omega
\end{array}
\right.
\end{equation}
By using a similar   argument as in the proof of Theorem \ref{main2} one can prove that, if  $1 < q < p < r$ then there exists $\mu ^{*} > 0$ such that for each $\mu \in (0,\mu ^{*})$ problem (\ref{conpp}) has infinitely many distinct nontrivial solutions with a negative energy.  
In our forthcoming project,  we are investigating  the existence of two positive solutions in Theorem \ref{main1} rather than just one.  We are also    extending both theorems \ref{main1} and \ref{main2} to the fractional laplacian case via the method proposed in this manuscript.

\end{document}